\newtheorem{Theorem}{Theorem}%[section]
\newtheorem{lemma}{Lemma}%[section]
\newtheorem{proposition}{Proposition}%[section]
\newtheorem{rem}{Remark}
\begin{document}
\title[Uniqueness of bound states]
      {On the uniqueness of  bound state solutions of
            a semilinear equation with weights  }\thanks{This research was supported by
        FONDECYT-1190102 for the first author,
        FONDECYT-1160540  for the second author and FONDECYT-1170665 for  third author.}
\author[C. Cort\'azar]{Carmen Cort\'azar}
\address{Departamento de Matem\'atica, Pontificia
        Universidad Cat\'olica de Chile,
        Casilla 306, Correo 22,
        Santiago, Chile.}
\email{\tt ccortaza@mat.uc.cl}
\author[M. Garc\'\i a-Huidobro]{Marta Garc\'{\i}a-Huidobro}
\address{Departamento de Matem\'atica, Pontificia
        Universidad Cat\'olica de Chile,
        Casilla 306, Correo 22,
        Santiago, Chile.}
\email{\tt mgarcia@mat.uc.cl}
\author[P. herreros]{Pilar Herreros}
\address{Departamento de Matem\'atica, Pontificia
        Universidad Cat\'olica de Chile,
        Casilla 306, Correo 22,
        Santiago, Chile.}
\email{\tt pherrero@mat.uc.cl}

\begin{abstract}
We consider radial solutions of a general elliptic equation involving a weighted Laplace operator.
We establish the  uniqueness of the   radial bound state solutions to
$$
\mbox{div}\big(\mathsf A\,\nabla v\big)+\mathsf B\,f(v)=0\,,\quad\lim_{|x|\to+\infty}v(x)=0,\quad x\in\mathbb R^n,\eqno{(P)}
$$
$n>2$, where $\mathsf A$ and $\mathsf B$ are two positive, radial, smooth functions defined on $\mathbb R^n\setminus\{0\}$.
 We assume that the nonlinearity  $f\in C(-c,c)$, $0<c\le\infty$ is an  odd function satisfying some
convexity and growth conditions, and  has a zero at $b>0$,
is non positive and not identically 0 in $(0,b)$, positive in $(b,c)$, and  is differentiable in $(0,c)$.
\end{abstract}

\maketitle
\today

\section{Introduction and main results}

In this paper we  establish the uniqueness of the radial  bound state solutions to
$$ \mbox{div}\big(\mathsf A\,\nabla v\big)+\mathsf B\,f(v)=0\,,\quad\lim_{|x|\to+\infty}v(x)=0,\quad x\in\mathbb R^n,\, n>2 $$
where $\mathsf A,\ \mathsf B$ are two positive, radial, smooth functions defined on $\mathbb R^n\setminus\{0\}$.
Under appropriate conditions on $\mathsf A$ and $\mathsf B$, this equation can be reduced to
\begin{equation}\label{eq1}
\mbox{div}\big(\mathsf q\,\nabla u\big)+\mathsf q\,f(u)=0\,,\quad\lim_{|x|\to+\infty}u(x)=0\quad x\in\mathbb R^n
\end{equation}
on $\mathbb R^n$, for some  smooth radial function $\mathsf q$   defined on $\mathbb R^n\setminus\{0\}$, see for example \cite{cgh1, cdghm}.

Radial solutions to~\eqref{eq1},  satisfy the problem
\begin{equation}\label{eq2}
\begin{gathered}
\big(q\ u')'+q(r)\,f(u)=0\,,\quad r>0\,,\\
u'(0)=0\,,\quad\lim\limits_{r\to+\infty}u(r)=0\,,
\end{gathered}
\end{equation}
with $q(r)=r^{n-1}\,\mathsf q(x)$, and $'=\frac{d}{dr}$ .

Any nonconstant solution to \eqref{eq2} is called a bound state solution. Bound state solutions such that $u(r)>0$ for all $r>0$,
are referred to as a first bound state solution, or  a ground state solution. We also use the expression $k$-th bound state when
referring to a solution of \eqref{eq2} with $k-1$ sign changes in $(0,\infty)$.

In the case that $q(r)=r^{n-1}$  the uniqueness of the first bound state solution of \eqref{eq2} or for the quasilinear situation
involving the $m$-Laplacian operator $\nabla\cdot (|\nabla u|^{m-2}\nabla u)$, $m>1$, has been exhaustively studied during the last
thirty years, see for example the works \cite{cl},  \cite{coff},
\cite{cfe1}, \cite{cfe2},  \cite{fls}, \cite{Kw}, \cite{m}, \cite{ms},
\cite{pel-ser1}, \cite{pel-ser2},  \cite{pu-ser}, \cite{st}, \cite{troy2}. For uniqueness of the second bound state see \cite{troy1}
and \cite{cghy1}.  In \cite{cghy2} we extended our result giving conditions on $f$ that
guarantee uniqueness of any bound state.

In the case of an arbitrary weight we refer to \cite{pghms, ghh, cgh1} for uniqueness of the ground state. For existence of $k$-th
bound states we refer to \cite{cdghm}.
This problem has also been studied in balls and annuli, see for example \cite{kaji,ta1, ta2, ta3, ta4, tang}.

We will assume that the function $f\in C((-c,c),\mathbb R)$, with $0<c\le\infty$,  satisfies $(f_1)$-$(f_2)$,  where
\begin{enumerate}
\item[$(f_1)$]$f$ is odd,  $f(0)=0$, and there exist $0<b<\beta<c$ such that $f(s)>0$ for $s>b$,
$f(s)\le 0$, $f(s)\not\equiv 0$ for $s\in[0,b]$, $F(\beta)=0$,  where
$F(s):=\int_0^sf(t)dt$.
\item[$(f_2)$] $f$ is  continuously differentiable in $(0,c)$, $f'\in L^1(0,1)$.
\end{enumerate}
We have assumed $f$ odd for simplicity,  this assumption can be relaxed to a sign condition: $f(0)=0$, and there exist $b^-<0<b^+$
such that $f(u)>0$ for $u>b^+$, $f(u)<0$ for $u<b^-$, and
$f(u)\le 0$, $f(u)\not\equiv 0$, for $u\in[0,b^+]$ and $f(u)\ge 0$, $f(u)\not\equiv 0$, for $u\in[b^-,0]$.

For the weight $q$ we assume:
\[
q\in
C^1(\mathbb R^+;\mathbb R^+)\,,\quad q\ge0,\quad q(0)=0\quad \mbox{\emph{and}}\quad q'>0\quad\mbox{on}\;(0,\infty)\,,\leqno{(q_1)}
\]
\[
q'/q\mbox{ \emph{is strictly decreasing on }}\; (0,\infty)\,.\leqno{(q_2)}
\]

Our first two results are an improvement of the one in \cite{cgh1} and the one in \cite{pghms}, where  uniqueness of the ground state
solution is established for the weighted case either in the case that $f$ satisfies a superlinear assumption or a sublinear assumption respectively.
They also extend the well known results of Pucci \& Serrin \cite{pu-ser} and Serrin \& Tang \cite{st} for the non weighted case to the weighted case.

\begin{Theorem}\label{main02}
Assume that  $f$ and $q$ satisfy  $(f_1)$-$(f_2)$, $(q_1)$-$(q_2)$  respectively and let $Q(r):=\int_0^rq(s)ds$. If in addition $q$ and $f$ satisfy
\begin{enumerate}
\item[$(q_3)$] (i) the function $H(r):=\displaystyle\Bigl(\frac{Q}{q}\Bigr)'(r)$ is  nonincreasing in $(0,\infty)$, with
$$H(0)<1/2,\quad H_\infty:=\lim_{r\to\infty}H(r)>0,\mbox{ and } (ii)\quad q(r)(H(r)-H_\infty)\quad\mbox{ is nondecreasing in $(0,\infty)$},$$
\end{enumerate}
and
\begin{enumerate}
\item[($f_3$)]$\displaystyle\Bigl(\frac{F}{f}\Bigr)'(s)\ge \frac{1}{2}(1-2H_\infty)$  for all $s>\beta$,
\end{enumerate}
then problem \eqref{eq2} has at most one  nonnegative  solution satisfying $u(0)>0$.

\end{Theorem}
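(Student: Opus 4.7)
The plan is to adapt the Pucci--Serrin/Serrin--Tang uniqueness scheme to the weighted equation, arguing by contradiction. Suppose two distinct nonnegative solutions $u_1, u_2$ of \eqref{eq2} exist with $u_1(0) < u_2(0)$. Under $(q_1)$--$(q_2)$ and $(f_2)$, the initial value problem $(qu')' + qf(u) = 0$, $u(0) = \alpha$, $u'(0) = 0$, admits a unique local $C^1$ solution $u(\cdot,\alpha)$ depending continuously on $\alpha$; the behavior at $r = 0$ is controlled by recasting the ODE as an integral equation in a suitable weighted space. The energy identity
$$\frac{d}{dr}\Bigl[\tfrac12(u')^2 + F(u)\Bigr] \;=\; -\frac{q'(r)}{q(r)}(u'(r))^2 \;\le\; 0,$$
combined with $F(\beta) = 0$ and $F < 0$ on $(0,\beta)$, forces $u_i(0) > \beta$ for any ground state.

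The central object I would introduce is the weighted Pohozaev-type functional
$$P(r) \;:=\; Q(r)\Bigl[\tfrac12(u'(r))^2 + F(u(r))\Bigr] + \bigl(\tfrac12 - H_\infty\bigr)\,q(r)\,u(r)\,u'(r).$$
Direct computation, using the ODE and $(Q/q)' = H$, yields
$$P'(r) \;=\; q(r)\bigl(H(r) - H_\infty\bigr)(u'(r))^2 + q(r)\Bigl[F(u) - \bigl(\tfrac12 - H_\infty\bigr)u\,f(u)\Bigr].$$
By $(q_3)(i)$, $H \ge H_\infty$, so the first summand is nonnegative. Condition $(f_3)$ together with $F(\beta) = 0$ gives $(F/f)(s) \ge (\tfrac12 - H_\infty)(s - \beta)$ for $s \ge \beta$, which controls the second summand on $\{u \ge \beta\}$. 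Moreover $P(0) = 0$ (since $Q(0) = 0$ and $u'(0) = 0$), and $P(\infty) = 0$ follows from the decay of a ground state (the hypothesis $H_\infty > 0$ provides enough growth of $q$ to ensure that $Q(u')^2$ and $q\,u\,u'$ vanish at infinity).

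To close the argument, I would compare $u_1$ and $u_2$ using $P$ together with the intersection structure of $u_1 - u_2$. Analyzing the linearized equation $(qw')' + qf'(u)w = 0$ satisfied by $w = \partial_\alpha u$, in the Kwong--Coffman vein, one shows that $u_1 - u_2$ has only finitely many sign changes; the Pohozaev identity $\int_0^\infty P'_i\,dr = 0$ applied to each $u_i$ then delivers incompatible bounds at these crossing points.

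The main obstacle is that $P'$ is not of one sign: on the region $\{u < \beta\}$ the integrand $F(u) - (\tfrac12 - H_\infty)u\,f(u)$ receives no direct control from $(f_3)$, since there $F \le 0$ while $uf(u)$ may change sign on $[0,b]$. The auxiliary monotonicity condition $(q_3)(ii)$---that $q(H - H_\infty)$ is nondecreasing---is precisely what is needed to integrate by parts the residual term $q(H - H_\infty)(u')^2$ and convert it into an $f$-type contribution compatible with $(f_3)$. Executing this reduction cleanly, and matching it against the intersection analysis of $u_1$ and $u_2$, is the technical heart of the proof.
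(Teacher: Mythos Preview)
Your proposal does not yet contain a working uniqueness mechanism, and the way you deploy $(q_3)(ii)$ is not the one that actually closes the argument.

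First, your Pohozaev functional $P(r)=Q\bigl[\tfrac12(u')^2+F(u)\bigr]+\bigl(\tfrac12-H_\infty\bigr)q\,u\,u'$ does \emph{not} have a sign-controlled derivative on $\{u\ge\beta\}$. From $(f_3)$ you only get $(F/f)(s)\ge(\tfrac12-H_\infty)(s-\beta)$, i.e.\ $F(u)-(\tfrac12-H_\infty)u\,f(u)\ge -(\tfrac12-H_\infty)\beta f(u)$, which is negative for $u>\beta$. So the ``good'' region is not good, and the identity $\int_0^\infty P'_i\,dr=0$ for each $u_i$ separately carries no contradiction. The suggestion that $(q_3)(ii)$ lets you integrate $q(H-H_\infty)(u')^2$ by parts into an $f$-type term is not substantiated; writing it out one does not obtain anything compatible with $(f_3)$.

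The paper's argument is structurally different in two respects. It works not with a functional of $r$ for a single solution, but with the Erbe--Tang functional
\[
P(s,\alpha)=-Q(r(s,\alpha))\bigl(|u'|^2+2F(s)\bigr)-2q(r(s,\alpha))\,u'\,\frac{F}{f}(s),
\]
parametrized by the \emph{value} $s$ of the solution, whose $s$-derivative is $q\,u'\bigl(1-2H(r)-2(F/f)'(s)\bigr)$; the variable coefficient $F/f$ (rather than your constant $\tfrac12-H_\infty$) is precisely what makes $(f_3)$ give a sign. One then takes two nearby solutions $u_1,u_2$, locates their first intersection via the variation $\varphi=\partial_\alpha u$, and compares $P_1(s)$ with $P_2(s)$ and the ratio $S_{12}(s)=q(r_1)|u_1'|/\bigl(q(r_2)|u_2'|\bigr)$. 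Condition $(q_3)(ii)$ enters exactly here: in the decomposition
\[
\tfrac{d}{ds}(P_1-P_2)=q|u_2'|(1-S_{12})\bigl(1-2H_\infty-2(F/f)'\bigr)-2\bigl(q|u_2'|(H(r_2)-H_\infty)-q|u_1'|(H(r_1)-H_\infty)\bigr),
\]
the monotonicity of $q(H-H_\infty)$ together with $r_1<r_2$ and $S_{12}<1$ forces $\tfrac{d}{ds}(P_1-P_2)<0$ on $[U_I,\alpha_1]$. For $s<\beta$ a second, genuinely different functional $W_j(s)=(Q/q)(r_j)\sqrt{|u_j'|^2+2F(s)}$ is needed; here $(q_3)(i)$ (monotonicity of $Qq'/q^2$) is what makes $\tfrac{d}{ds}(W_1-W_2)>0$. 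The two-functional comparison propagates the ordering $r_1>r_2$, $|u_1'|<|u_2'|$ down to $s=0$, which yields the local separation lemma (if $\alpha_1\in\mathcal G_1\cup\mathcal N_1$ then $\alpha_2\in\mathcal N_1$), and uniqueness follows by a connectedness argument on the shooting sets.

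In short: replace the single-solution $r$-functional by the two-solution $s$-parametrized comparison, use $F/f$ rather than a constant multiplier, and introduce the companion functional $W$ for the zone $F<0$. Without these ingredients the proof does not close.
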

\begin{rem}\label{Qqprime}
Note that by $(q_1)$-$(q_2)$, $\displaystyle H(r)>0$ for all $r>0$.
\end{rem}

\begin{Theorem}\label{main01}
Assume that  $f$ and $q$ satisfy  $(f_1)$-$(f_2)$, $(q_1)$-$(q_2)$. If in addition $q$ and $f$ satisfy
\begin{enumerate}
\item[ ]
$$\begin{cases}
\displaystyle\frac{1}{q}\in L^1(1,\infty)\setminus L^1(0,1)\quad\mbox{and the function }
\displaystyle h(r):=q(r)\int_r^\infty\frac{ds}{q(s)}\\
\mbox{satisfies }
h'(r)\mbox{ is strictly positive and nonincreasing for all }r>0,\\ \mbox{ with }\ell_\infty:=\lim_{r\to\infty}h'(r)>0.\end{cases}\leqno{\quad(q_4)}
$$
\end{enumerate}
and
\begin{enumerate}
	\item[$(f_4)$] $\displaystyle \frac{sf'(s)}{f(s)}$ decreases for all $s\ge b$,
\end{enumerate}
then problem \eqref{eq2} has at most one  nonnegative  solution satisfying $u(0)>0$.
\end{Theorem}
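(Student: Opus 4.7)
The plan is to argue by contradiction. Assume that \eqref{eq2} admits two distinct nonnegative solutions $u_1,u_2$ with $\alpha_1:=u_1(0)>u_2(0)=:\alpha_2>0$. The energy identity $\bigl(\tfrac12(u')^2+F(u)\bigr)'=-(q'/q)(u')^2\le 0$, combined with the decay $u,u'\to 0$ at infinity, forces each $\alpha_i\in[\beta,c)$ and shows that each $u_i$ is strictly decreasing and positive on $[0,\infty)$, crossing the value $b$ at a unique point $R_i$.

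The next step is to recast \eqref{eq2} in terms of the natural weight furnished by $(q_4)$. A direct computation gives $q'(r)/q(r)=(h'(r)+1)/h(r)$, so \eqref{eq2} is equivalent to $u''+\frac{h'(r)+1}{h(r)}u'+f(u)=0$. This identifies $h$ as the analogue of $r/(n-2)$ in the classical case $q(r)=r^{n-1}$, and the hypotheses built into $(q_4)$ (positivity and monotonicity of $h'$, together with $\ell_\infty>0$) mirror exactly the structural properties of the Euclidean radial variable that the classical Serrin--Tang proof exploits. I would then parametrize solutions by $\alpha=u(0)$ and introduce $\varphi:=\partial_\alpha u$, which satisfies the linearized problem $(q\varphi')'+qf'(u)\varphi=0$, $\varphi(0)=1$, $\varphi'(0)=0$.

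The heart of the argument is a Pohozaev-type identity weighted by $h$. I would construct a functional
$$P(r)=h(r)q(r)\Bigl[\tfrac12 u'(r)^2+F(u(r))\Bigr]+\mu(r)\,q(r)\,u(r)u'(r),$$
where $\mu$ is built from $h$, $h'$, and $\ell_\infty$ so that, thanks to $(f_4)$ and $(q_4)$, $P'(r)$ has a definite sign on each of $(0,R)$ and $(R,\infty)$, with a compatible transition at $r=R$. Combining this with $P(0)=0$, the limit $P(r)\to 0$ as $r\to\infty$, and a comparison of $P_{u_1}$ with $P_{u_2}$ along the two putative ground states, I would derive a contradiction. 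An equivalent route, which I would run in parallel, replaces $P$ by the weighted Wronskian $W(r)=q(r)(u'\varphi-u\varphi')+\Lambda(r)q(r)uu'$, whose derivative reduces via the two ODEs to a combination of $q\varphi[uf'(u)-f(u)]$ and $\Lambda$-terms; the sign of $uf'(u)-f(u)$ is governed by $(f_4)$, and the choice of $\Lambda$ by $(q_4)$, leading to the same contradiction via Sturm separation.

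The main obstacle is the construction of $\mu$ (or $\Lambda$) and the sign analysis of $P'$ in the region $\{u<b\}$, where $f\le 0$ and $F$ may be negative, so that the naive Pohozaev identity produces terms of indefinite sign. Matching the $(q_4)$-controlled behavior of $h$ and $h'$ against the $(f_4)$-controlled behavior of $sf'(s)/f(s)$ across the crossing radius $R_i$—and ensuring integrability near $r=0$ compatibly with $f'\in L^1(0,1)$ from $(f_2)$—is the delicate quantitative step that genuinely extends the classical Serrin--Tang argument to the arbitrary-weight setting.
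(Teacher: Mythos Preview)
Your proposal is a sketch, not a proof: the crucial ingredient---the multiplier $\mu$ (or $\Lambda$) and the sign analysis of $P'$ across the region $\{u<b\}$---is named as the ``main obstacle'' but never carried out. Without it there is nothing to evaluate, since the entire content of the theorem lies precisely in that step. A Pohozaev functional of the shape $P(r)=h(r)q(r)\bigl[\tfrac12(u')^2+F(u)\bigr]+\mu(r)q(r)uu'$ does not automatically have monotonicity on $(R,\infty)$ under $(f_4)$ alone: once $u<b$ one has $f(u)\le 0$, $F(u)<0$, and $uf'(u)-f(u)$ may change sign, so the ``sign governed by $(f_4)$'' claim does not hold there.

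The paper proceeds quite differently. It does \emph{not} compare two ground states globally via a Pohozaev identity in $r$. Instead it runs a local shooting argument at a single $\alpha^*\in\mathcal G_1$: one first controls the linearization $\varphi=\partial_\alpha u$ through a monotonicity lemma showing that $r\mapsto h(r)u'(r)/u(r)$ is strictly decreasing on $(r_1,r(b,\alpha))$ whenever $\varphi$ has a first zero $r_1$ before $r(\beta,\alpha)$ (this is where $(f_4)$ and $(q_4)$ genuinely interact, via an auxiliary comparison function $v=hu'+C_1 u$ and a delicate sign analysis of $\int_0^r q\bigl[2\tfrac{(qh)'}{q}F_0(u)-uf(u)\bigr]$). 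From this one extracts $(\varphi+h\varphi')(r(b,\alpha))<0$. The comparison of two nearby solutions is then done in the $s$-variable (i.e.\ along the inverse $r(s,\alpha)$) with the functional
\[
V(s,\alpha)=h(r(s,\alpha))\sqrt{|u'(r(s,\alpha))|^2+2F(s)},
\]
whose $s$-derivative has a definite sign on $[\max\{S_1,S_2\},U)$ because $F<0$ there and $h'$ is nonincreasing. This yields ordering of the zeroes and of $|u'|$ at $0$, hence Proposition~\ref{sep5q0}, and uniqueness follows by the open--closed continuation argument.

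In short, the paper's two key ideas---the $hu'/u$-monotonicity controlling $\varphi$, and the $V$-functional in the $s$-variable---are absent from your outline, and your proposed $P$ in the $r$-variable, with an unspecified $\mu$, does not substitute for them.
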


Our third result extends to the weighted case that of \cite{cghy2} in the sublinear or sub-Serrin situation:

\begin{Theorem}\label{main1}
	Assume that  $f$ and $q$ satisfy  $(f_1)$-$(f_2)$, $(q_1)$-$(q_4)$. If in addition $q$ satisfies
\begin{enumerate}
\item[$(q_5)$] 	$q'q\mbox{ \emph{is strictly increasing on }}(0,\infty)$
	\end{enumerate}
	then
	\begin{enumerate}
\item[(i)] If $f$ satisfies the subcritical assumption $(f_3)$ and
	\begin{enumerate}
		\item[$(f_5)$] $f(s)\ge f'(s)(s-b)$, for all $s\ge b$,
	\end{enumerate}
	then problem \eqref{eq2} has at most one  nonnegative  solution satisfying $u(0)>0$, and at most one solution satisfying $u(0)>0$ which has exactly one sign change in $(0,\infty)$.
\item[(ii)]	 If $f$ satisfies  $(f_4)$ and
\begin{enumerate}
\item[$(f_6)$] $\displaystyle \frac{\beta f'(\beta)}{f(\beta)}\le1+2\ell_\infty$,
\end{enumerate}
then problem \eqref{eq2} has at most one  nonnegative  solution satisfying $u(0)>0$, and at most one solution satisfying $u(0)>0$ which has exactly one sign change in $(0,\infty)$.
\end{enumerate}	
\end{Theorem}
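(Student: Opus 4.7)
The plan is to follow the framework of \cite{cghy2}, where the same ``at most one ground state, at most one second bound state'' statement was proved for $q(r)=r^{n-1}$, and to carry it through in the weighted setting using $(q_1)$--$(q_5)$. Parametrize solutions of \eqref{eq2} by initial value, writing $u(r,\alpha)$ for the solution with $u(0,\alpha)=\alpha>0$, $u'(0,\alpha)=0$. The ground-state part of~(ii) is Theorem \ref{main01} applied verbatim. For the ground-state part of~(i), the hypothesis $(q_3)$ is replaced by $(q_4)$--$(q_5)$, so I would re-derive ground-state uniqueness via a Serrin--Tang-type Pohozaev monotonicity argument with test function built from $h$ in $(q_4)$; here $(q_5)$ supplies the monotonicity of $qq'$ that, combined with $(f_3)$, takes the place of $(q_3)$ in the usual monotone-energy argument.

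The core new case is the second bound state. Suppose for contradiction that $u_i=u(\cdot,\alpha_i)$, $i=1,2$, with $\alpha_1>\alpha_2$, are both second bound states, each with a single zero at $r_i$ and decay to $0$. Using the oddness of $f$, the reflections $v_i(r):=-u_i(r)$ on $[r_i,\infty)$ are positive, vanish at $r_i$, decay at infinity, and solve the same ODE. I would split the Pohozaev/energy analysis into two pieces: on the positive hump $[0,r_i]$ via the standard ground-state functional, and on $[r_i,\infty)$ via the reflected version applied to $v_i$, matching the boundary contributions at $r=r_i$ (both sides contribute $\tfrac12 q(r_i)u_i'(r_i)^2$ weighted by the test function). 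An intersection/separation analysis for $u_1$ and $u_2$ in the spirit of \cite{cghy1} -- controlled in case~(i) by the concavity-type condition $(f_5)$ and in case~(ii) by the subcritical bound $(f_6)$, noting that for $q(r)=r^{n-1}$ one has $1+2\ell_\infty=n/(n-2)$, the Sobolev exponent -- combined with Sturm comparison on the linearized equation $(q\phi')'+qf'(u)\phi=0$ for $\phi=\partial_\alpha u$, would then force the total Pohozaev balance of each $u_i$ to have a definite sign, incompatible with the coexistence of $u_1$ and $u_2$.

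The main technical obstacle is propagating the Pohozaev identity across the inner boundary $r=r_i$. In \cite{cghy2} the scale-invariance of the weight $r^{n-1}$ makes this step essentially automatic, but for a general $q$ one needs $(q_5)$ precisely to ensure that the shifted ground-state functional on $(r_i,\infty)$ retains the right monotonicity. Pinning down the exact monotonicity required on $qq'$, and confirming that $(f_5)$ (resp.\ $(f_6)$) is tight enough to absorb the boundary contributions at $r_i$ when $r_1\ne r_2$, will be the delicate technical heart of the argument.
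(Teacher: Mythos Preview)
Your proposal has several genuine gaps and also departs from the paper's architecture in a way that matters.

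First, a misreading: the hypothesis of Theorem~\ref{main1} is $(q_1)$--$(q_4)$, which includes $(q_3)$. So the ground-state statement in part~(i) is literally Theorem~\ref{main02}, and there is nothing to re-derive; you do not need to manufacture a replacement for $(q_3)$ out of $(q_4)$--$(q_5)$.

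More seriously, your plan for the second bound state is not the mechanism the paper uses, and as written it would not close. The paper does \emph{not} assume two second bound states $u_1,u_2$ exist and compare their global Pohozaev balances. Instead it proves a \emph{local} separation statement (Proposition~\ref{sep5q}): for $\alpha^*\in\mathcal G_2$ there is a $\delta>0$ such that, for any two nearby initial values $\alpha_1<\alpha_2$ in $(\alpha^*-\delta,\alpha^*+\delta)$, one has $\alpha_1\in\mathcal G_2\cup\mathcal N_2\Rightarrow\alpha_2\in\mathcal N_2$ and $\alpha_2\in\mathcal G_2\Rightarrow\alpha_1\in\mathcal P_2$. Uniqueness then follows by an open--closed argument in $\alpha$. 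The comparison is between two \emph{nearby} solutions, not two bound states, and it is carried out in the $s$-variable (inverting $u$) via the functionals $\widetilde W(s)=q(r(s))\sqrt{|u'|^2+2F(s)}$, the Erbe--Tang functional $P(s)$, and their ``way-up'' counterparts $\bar W,\bar P$. The role of $(q_5)$ is precisely in Lemma~\ref{sep2mm}: the $s$-derivative of $\widetilde W$ contains the factor $q(r)q'(r)$, and strict monotonicity of $qq'$ is what makes $\widetilde W_1-\widetilde W_2$ monotone on $[-\beta,U_{bI})$ where $F(s)<0$. It has nothing to do with ``propagating a Pohozaev identity across $r=r_i$''; there is no boundary matching at the zero in the paper's argument. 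Likewise, $(f_5)$ enters only to place the first zero of $\varphi=\partial_\alpha u$ after $r(b,\alpha)$ (Proposition~\ref{facts2}(2)), and $(f_4)$--$(f_6)$ enter via Proposition~\ref{varphi3} to guarantee $\varphi'(r(b,\alpha))<0$; these are the inputs that initialize the $\widetilde W$-comparison, not bounds that ``absorb boundary contributions at $r_i$''.

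Your sketch never produces a quantity whose sign is forced; ``the total Pohozaev balance of each $u_i$ has a definite sign, incompatible with coexistence'' is asserted but no such functional is written down, and for a general weight the two humps live over disjoint $r$-intervals with different values of $q$, so a single Pohozaev identity does not compare them. To make your route work you would at minimum need to replace the global two-bound-state comparison by the local $\alpha$-perturbation picture and identify the correct $s$-functionals.
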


\begin{rem}
\label{46da3} It will be shown at the end of section \ref{prel}  that assumptions $(f_4)$ and $(f_6)$ imply $(f_3)$, so in case $(ii)$ above we are also assuming $(f_3)$.
\end{rem}

Finally, we can prove the uniqueness of the  $k$-th bound state for any $k\in\mathbb N\cup\{0\}$ if we impose a stronger subcritical condition on $f$ and different growth assumptions on the weight $q$. We have:

\begin{Theorem}\label{main2}
Let  $k\in\mathbb N$, and assume that $f$ and $q$ satisfy  $(f_1)$-$(f_2)$, $(q_1)$-$(q_2)$ and $(q_4)$.  If in addition $q$ and $f$ satisfy
\begin{enumerate}
\item[$(q_6)$]
(i) the function
$\displaystyle
G(r):=\frac{q'}{qh}\int_0^rh(s)ds-\frac{1}{2}
$
is nonnegative  and satisfies
$$\overline{G}:=\sup_{r>0}G(r)<\infty,\quad\mbox{and }(ii)\quad\frac{Gh}{(\int_0^rh(s)ds)^{1/2}}\quad\mbox{nondecreasing in $(0,\infty)$}$$
\item[$(q_7)$] There exists $a\in(0,1)$ and $\overline{G} \le C \le 1$ such that $h^{1-a}(r)(C-G(r))$ is nondecreasing,
\end{enumerate}
and
\begin{enumerate}
\item[$(f_7)$] $\displaystyle\Bigl(\frac{F}{f}\Bigr)'(s)\ge C$ for all $s>\beta$,
\end{enumerate}
then problem \eqref{eq2} has at most one  solution satisfying $u(0)>0$ which has exactly $k$ sign changes in $(0,\infty)$.
\end{Theorem}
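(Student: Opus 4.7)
The plan is to argue by contradiction: suppose there exist two distinct solutions $u_1, u_2$ of \eqref{eq2}, both with $u_i(0) > 0$ and both having exactly $k$ sign changes in $(0, \infty)$, say with $u_1(0) < u_2(0)$. The strategy would follow the template of \cite{cghy2}, extended to the weighted setting by treating the function $h$ from $(q_4)$ as the natural ``dimension-adjusted'' variable replacing the radial coordinate $r$; in particular, condition $(q_6)$ identifies $G$ as the appropriate substitute for the constant $(n-2)/2$ that appears in the unweighted Pohozaev identity on $\mathbb{R}^n$.

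The main technical device is a Pohozaev-type functional of the form
\[
P(r;u) \;:=\; \alpha(r)\,q(r)\,h(r)\,(u')^2 \,+\, \beta(r)\,q(r)\,u\,u' \,+\, \gamma(r)\,q(r)\,F(u),
\]
with weights $\alpha,\beta,\gamma$ to be chosen so that, after substituting from \eqref{eq2},
\[
P'(r) \;=\; q(r)\Bigl(A(r)\,(u')^2 \,+\, B(r)\,\bigl[f(u)\,u - 2C\,F(u)\bigr]\Bigr),
\]
with $A, B \ge 0$. Here the monotonicity of $Gh/(\int_0^r h)^{1/2}$ from $(q_6)(ii)$ is used to produce $A \ge 0$, the assumption $(q_7)$ with the \emph{same} constant $C$ that appears in $(f_7)$ produces $B \ge 0$, and the bound $\overline{G} \le C$ from $(q_6)(i)$ provides the compatibility between the two mechanisms. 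The subcritical condition $(f_7)$ rewrites as $f(u)u - 2C\,F(u) \ge 0$ for $|u| > \beta$, while in the range $|u| \le \beta$ the sign conditions of $(f_1)$ together with $F \ge 0$ there control the remainder; thus $P'$ acquires a definite sign.

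With $P(\cdot;u_i)$ monotone on the appropriate interval and $P(\infty;u_i) = 0$ (since $u_i, u_i' \to 0$), I would compare $u_1$ and $u_2$ through a zero-counting argument in the spirit of Kwong--Coffman and of \cite{cghy2}: fix $\lambda > 0$ so that the linear combination $w := u_2 - \lambda u_1$ vanishes at a chosen radius, and use Sturm-type comparison against the linearization of \eqref{eq2} about $u_1$. The monotonicity of $P$, evaluated at the sign-change radii of $u_1$ and $u_2$ and at infinity, restricts the admissible zeros of $w$; one then shows that the hypothesis of both $u_i$ having exactly $k$ sign changes forces $w$ to oscillate strictly more than this bound permits, which is the contradiction.

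The main obstacle will be the calibration of the weights $\alpha,\beta,\gamma$ so as to reconcile simultaneously $(q_6)(i)$, $(q_6)(ii)$, and $(q_7)$ through the common constant $C$. The exponent $a \in (0,1)$ in $(q_7)$ is most likely forced by an interpolation: small enough that the monotonicity of $h^{1-a}(C - G)$ can be ensured at the near-infinity regime, yet large enough that the factor $h^a$ preserves integrability and the correct boundary behavior of $P$ at $r = 0^+$. Verifying that this calibration controls the sign of $P'$ uniformly across all $k+1$ subintervals determined by the successive sign changes of $u_1$ and $u_2$ is the subtlest part, and is what promotes the ground-state-and-one-sign-change argument underlying Theorem~\ref{main1} to the general $k$-th bound state statement.
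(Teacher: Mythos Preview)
Your proposal has a concrete algebraic error and a structural mismatch with what actually works. First, $(f_7)$ does \emph{not} rewrite as $f(u)u - 2CF(u) \ge 0$: expanding $(F/f)' = 1 - Ff'/f^2$, the condition $(F/f)'\ge C$ reads $Ff' \le (1-C)f^2$, which involves $f'$, not the combination $uf-2CF$. So the target identity $P'(r) = q(A(u')^2 + B[fu - 2CF])$ cannot be the mechanism through which $(f_7)$ enters, and no choice of $\alpha,\beta,\gamma$ will produce it. The paper's functional is instead parametrized by the \emph{value} $s$ of $u$ via the inverse $r(s,\alpha)$:
\[
\mathcal T(s,\alpha) = -\frac{F}{f}(s)\,h(r)\,u'(r) - \tfrac12\Bigl(\int_0^{r}h\Bigr)\bigl(|u'|^2 + 2F(s)\bigr) + T_\pm(s),\qquad \partial_s\mathcal T = \Bigl(G(r) - \Bigl(\tfrac{F}{f}\Bigr)'(s)\Bigr)h(r)u'(r),
\]
so that $(f_7)$ together with $G\le\overline G\le C$ gives the sign of $\partial_s\mathcal T$ on $\{|s|\ge\beta\}$. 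A second functional $\widehat W(s,\alpha)=(\int_0^{r}h)^{1/2}\sqrt{|u'|^2+2F}$ is used on $\{|s|\le\beta\}$, and this is where $(q_6)(ii)$ enters, through the monotonicity of $\tilde G = 2Gh/(\int_0^r h)^{1/2}$. Condition $(q_7)$ is used separately to propagate the inequality $h^a(r_1)|u_1'| < h^a(r_2)|u_2'|$ before the first intersection of two nearby solutions; the exponent $a$ is there to make the $s$-derivative of $h^a|u'|$ have a sign, not for integrability at $r=0^+$.

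The global argument is not a Kwong--Coffman combination $w=u_2-\lambda u_1$ with Sturm comparison. Instead one fixes $\alpha^*\in\mathcal G_k$, takes $\alpha_1<\alpha_2$ nearby, and tracks the two solutions through their successive extrema, showing inductively that $|u_1(T_{i-1}(\alpha_1))|<|u_2(T_{i-1}(\alpha_2))|$ and $\mathcal T_1>\mathcal T_2$ persist at each turn. On the final interval $(T_{k-1},Z_k)$ this yields $\alpha_1\in\mathcal G_k\cup\mathcal N_k\Rightarrow\alpha_2\in\mathcal N_k$ and $\alpha_2\in\mathcal G_k\Rightarrow\alpha_1\in\mathcal P_k$; uniqueness then follows from a standard open--closed argument on the sets $\mathcal N_k,\mathcal P_k,\mathcal G_k$. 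The passage to general $k$ is this finite iteration over extrema, not an oscillation count of a linear combination.
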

\begin{rem}\label{C>1} Note that as $hG'$ is bounded, we can always find $a\in(0,1)$ and $C\ge \overline{G}$ so that $(q_7)$ is satisfied. The point here is that as $F(\beta)=0$, for condition $(f_7)$ to be meaningful it is necessary that $C\le 1$.
\end{rem}
\begin{rem}
	\label{Q6}Note that from $(q_4)$ we have that $hq'/q=h'+1$  and  $h/(\int_0^rh(s)ds)^{1/2}$ are nonincreasing. Hence,   $(q_6)(ii)$ implies  that  $G$ is nondecreasing.
\end{rem}

\bigskip

Some examples of weights satisfying $(q_1)$-$(q_7)$ are $q(r)=r^\theta$, $\theta> 1$, $q(r)=r^{\theta+1}+Cr^\theta$, $C>0$ and $\theta> 1$, or a $q$ which behaves like the previous ones near $0$ and has a different growth rate near infinity, such as $r^\alpha/(\log(r))^\beta$ for some $\alpha,\beta>0$, see subsection \ref{examples}.

Our results will follow after a detailed study of the solutions to
the initial value problem
\begin{equation}\label{ivp}
\begin{gathered}
u''(r)+\frac{q'}{q}u'(r)+f(u)=0
\quad r>0,\\
u(0)=\alpha\quad u'(0)=0
\end{gathered}
\end{equation}
for $\alpha\in(0,\infty)$. As usual, we will denote by $u(r,\alpha)$ a $C^2$ solution of \eqref{ivp}, and denote by $r(\cdot,\alpha)$ ( $\bar r(\cdot,\alpha)$) its inverse in the intervals where $u$ decreases (increases).
We will follow the solution to \eqref{ivp} for initial values in a neighborhood of an initial condition that produces a bound state. The main tools used in this analysis  are the functionals described in sections 3 and 4. Our main contribution is the construction of these functionals, which are nontrivial generalizations of the corresponding functionals for the case $q(r)=r^{n-1}$. For example, the well known functional
$$r\sqrt{|u'|^2+2F(u)}$$
appearing in the literature is used here in the three forms
$$\frac{Q}{q}\sqrt{|u'|^2+2F(u)},\quad h\sqrt{|u'|^2+2F(u)}\quad\mbox{and}\quad \Bigl(\int_0^rh(t)dt\Bigr)^{1/2}\sqrt{|u'|^2+2F(u)},$$
where $h$ is defined in $(q_4)$.

\section{Preliminaries}\label{prel}

The aim of this section is to establish several properties of the solutions to the initial value problem \eqref{ivp}.  It can be seen that this problem has a uniquely defined solution at least until it reaches a double zero, see for example \cite{cdghm}. After a double zero we will choose the solution as identically zero,  as any other extension will not be a bound state by   Proposition \ref{basic1} $(iii)$.  We will first state   some known properties (see for example \cite{cghy1,cghy2,cdghm}) of the solutions to \eqref{ivp}, as well as some general properties of the solutions to \eqref{ivp} with initial values close to $\alpha^ *$, {\em that only depend on the structural assumptions} $(f_1)$-$(f_2)$ and $(q1)$-$(q2)$ of the nonlinearity $f$ and the weight $q$. Since their proofs are a step by step modification of those in \cite{cghy1,cghy2} we omit them.  Here and henceforth $u(\cdot,\alpha^*)$ will  denote a $k$-th bound state solution to \eqref{eq2}.

\subsection{On the initial value problem}\mbox{ }\\

We start with the following proposition which was proved in \cite{cdghm}.

\begin{proposition}\label{basic1} Let $u$ be a solution of \eqref{ivp} for some $\alpha>0$, with $q$ and $f$ satisfying $(q_1)$-$(q_2)$  and $(f1)$-$(f2)$ respectively, and consider the energy functional
\begin{equation}\label{I}
		I(r,\alpha):=|u'(r,\alpha)|^2+2F(u(r,\alpha)).
\end{equation}
\begin{enumerate}
\item[$(i)$] The energy $I$ is nonincreasing in $r$ and bounded, hence $\lim_{r\to+\infty}I(r,\alpha)=:\mathcal I$ is finite.
\item[$(ii)$] There exists $C_{\alpha}>0$ such that $|u(r)|+|u'(r)|\le C_{\alpha}$ for all $r\ge 0$.
\item[$(iii)$] If $u$ reaches a double zero at some point $r_0>0$, then $u$ does not change sign on $[r_0,\infty)$. Moreover, if $u\not\equiv 0$ for $r\ge r_0$, then there exists $r_1\ge r_0$ such that $u(r)\neq0$, and $I(r)<0$ for all $r>r_1$ and $u\equiv 0$ on $[r_0,r_1]$.
\item[$(iv)$]If $\lim\limits_{r\to+\infty}u(r)=\ell$ exists, then $\ell$ is a zero of $f$ and $\lim\limits_{r\to \infty}u'(r)=0$.
\end{enumerate}
\end{proposition}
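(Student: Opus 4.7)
The plan is to take the energy functional $I(r,\alpha) = |u'(r,\alpha)|^2 + 2F(u(r,\alpha))$ as the central object and to derive all four claims from its monotonicity. Multiplying the ODE by $2u'$ yields $I'(r) = 2u'u'' + 2f(u)u' = -2(q'(r)/q(r))|u'(r)|^2$, which is nonpositive by $(q_1)$. Hence $I$ is nonincreasing, and combined with the lower bound $I \ge 2F(u) \ge 2\min_{|s|\le \alpha}F(s)$ (valid once $(ii)$ is established) this gives the existence of $\mathcal{I} := \lim_{r\to\infty}I(r)$, proving $(i)$. For $(ii)$, the pointwise inequality $2F(u(r)) \le I(r) \le I(0) = 2F(\alpha)$, combined with the fact from $(f_1)$ that $F$ is strictly increasing on $[\beta,c)$ and, by oddness, strictly decreasing on $(-c,-\beta]$, forces $|u(r)| \le \max\{\alpha,\beta\}$; then $|u'(r)|^2 \le 2F(\alpha)-2F(b)$ since $F(b) = \min_{|s|\le \beta}F$.

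For $(iii)$, at a double zero $r_0$ we have $I(r_0)=0$, and monotonicity gives $I(r)\le 0$ for all $r\ge r_0$. Define $r_1 := \sup\{r\ge r_0 : u\equiv 0 \text{ on } [r_0,r]\}$; if $u\not\equiv 0$ on $[r_0,\infty)$, then $r_1$ is finite and continuity gives $u(r_1) = u'(r_1) = 0$. Were $u(r_*)=0$ for some $r_*>r_1$, then $I(r_*) = u'(r_*)^2 \le 0$ would force $u'(r_*)=0$; on the interval $(r_1, r_*)$ where, say, $u>0$, the confinement $I\le 0$ keeps $u\in [0,\beta]$, and on the portion where $u\in[0,b]$ the sign condition $f(u)\le 0$ makes $(qu')'=-qf(u)\ge 0$, so $qu'$ is nondecreasing from $0$ and thus $u'\ge 0$. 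A case analysis extending this argument to when $u$ exceeds $b$, using the energy barrier at $u=\beta$ (where $F(\beta)=0$ forces $u'=0$), rules out $u$ returning to $0$. Strictness $I(r)<0$ on $(r_1,\infty)$ follows since $I'\equiv 0$ on an interval would force $u'\equiv 0$, making $u$ constant with $f(u)=F(u)=0$, hence $u\in\{0,\pm\beta\}$, each alternative contradicting either the maximality of $r_1$ or continuity at $r_1$.

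For $(iv)$, combining $I\to\mathcal{I}$ with continuity $F(u)\to F(\ell)$ gives $(u'(r))^2\to c_0 := \mathcal{I}-2F(\ell)\ge 0$. If $c_0>0$, then $|u'|$ stays bounded away from $0$, contradicting $u\to\ell$; hence $u'\to 0$. By $(q_2)$, $q'/q$ is monotone and thus has a finite limit $L\ge 0$, so the ODE yields $u''=-(q'/q)u'-f(u)\to -f(\ell)$; if $f(\ell)\ne 0$, then $u'$ would diverge, contradicting $u'\to 0$, so $f(\ell)=0$. The main obstacle is $(iii)$: uniqueness of the initial value problem fails at a double zero (both the trivial continuation and nontrivial compactly-supported-type solutions emanate from it), so the no-sign-change claim cannot be derived from standard ODE theory and requires combining the energy confinement $I\le 0$ with the sign structure of $f$ near $0$ (to get monotonicity of $qu'$) and the barrier at $u=\beta$.
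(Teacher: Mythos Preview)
The paper does not supply its own proof of this proposition; it simply states that the result ``was proved in \cite{cdghm}'' and moves on. So there is nothing in the paper to compare your argument against, and your outline must be judged on its own merits.

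Parts $(i)$, $(ii)$, and $(iv)$ are fine. The energy computation $I'(r)=-2(q'/q)|u'|^2\le 0$ is the standard one, the bound $|u|\le\max\{\alpha,\beta\}$ follows from $F(u)\le F(\alpha)$ exactly as you say, and in $(iv)$ the chain ``$|u'|^2\to\mathcal I-2F(\ell)$, hence $u'\to 0$, hence $u''\to -f(\ell)$, hence $f(\ell)=0$'' is correct once you note that $q'/q$, being positive and strictly decreasing by $(q_1)$--$(q_2)$, has a finite limit at infinity.

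The one genuine gap is in $(iii)$. You reduce the no-sign-change claim to showing that $u$ cannot return to $0$ after $r_1$, and then write that ``a case analysis extending this argument to when $u$ exceeds $b$, using the energy barrier at $u=\beta$, rules out $u$ returning to $0$'' --- but you do not carry out that case analysis, and it is more delicate than it looks (for instance, $F$ can vanish at points of $(0,b)$ if $f$ vanishes on an initial subinterval, which spoils the ``interior maximum gives $I<0$'' shortcut). The irony is that you already have the clean argument in hand, since you use it two lines later for the strictness of $I$: if $u(r_*)=0$ for some $r_*>r_1$, then $I(r_*)=|u'(r_*)|^2\ge 0$ while $I(r_*)\le I(r_0)=0$, so $I(r_*)=0$; monotonicity then forces $I\equiv 0$ on $[r_0,r_*]$, whence $I'=-2(q'/q)|u'|^2\equiv 0$ and (since $q'/q>0$) $u'\equiv 0$, so $u\equiv u(r_0)=0$ on $[r_0,r_*]$, contradicting the maximality of $r_1$. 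This single observation replaces the unfinished case analysis entirely and gives both $u\neq 0$ on $(r_1,\infty)$ and, by continuity, the constant sign of $u$ there. With that substitution your proof of $(iii)$ is complete.
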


It is clear that for $\alpha\in(b,\infty)$, one has $u(r,\alpha)>0$ and $u'(r,\alpha)<0$ for
$r$ small enough, and thus  we can define the extended real number
$$Z_1(\alpha):=\sup\{r>0\ |\ u(s,\alpha)>0\mbox{ and }u'(s,\alpha)<0\ \mbox{ for all }s\in(0,r)\}.$$

Following \cite{pel-ser1}, \cite{pel-ser2} we set
\begin{eqnarray*}
{\mathcal N_1}&=&\{\alpha>b\ :\ u(Z_1(\alpha),\alpha)=0\quad\mbox{and}\quad u'(Z_1(\alpha),\alpha)<0\}\\
{\mathcal G_1}&=&\{\alpha>b\ :\ u(Z_1(\alpha),\alpha)=0\quad\mbox{and}\quad u'(Z_1(\alpha),\alpha)=0\}\\
{\mathcal P_1}&=&\{\alpha>b\ :\ u(Z_1(\alpha),\alpha)>0\}.
\end{eqnarray*}

For $k\ge 2$, and if ${\mathcal N_{k-1}}\not=\emptyset$, we set
$$\widetilde{\mathcal F}_k=\{\alpha\in\mathcal N_{k-1}\ :\ (-1)^ku'(r,\alpha)<0\quad\mbox{for all }r>Z_{k-1}(\alpha)\}.$$
For $\alpha\in \mathcal N_{k-1}\setminus\widetilde{\mathcal F}_k$, we set
$$T_{k-1}(\alpha):=\inf\{r>Z_{k-1}(\alpha)\ :\ u'(r,\alpha)=0\},
$$
and if $\alpha\in \widetilde{\mathcal F}_k$, we set $T_{k-1}(\alpha)=\infty$. Next, for $\alpha\in \mathcal N_{k-1}\setminus \widetilde{\mathcal F}_k$, we define the extended real number
\begin{eqnarray*}
Z_k(\alpha):=\sup\{r>T_{k-1}(\alpha)\ |\ (-1)^ku(s,\alpha)<0\mbox{ and }(-1)^ku'(s,\alpha)>0\ \\
\mbox{ for all }s\in(T_{k-1}(\alpha),r)\}.
\end{eqnarray*}
Finally we set
$${{\mathcal F}_k}=\{\alpha\in\mathcal N_{k-1}\setminus \widetilde{\mathcal F}_k\ :\ (-1)^ku(Z_k(\alpha),\alpha)<0\},$$
and we decompose $\mathcal N_{k-1}=\mathcal N_k\cup\mathcal G_k\cup\mathcal P_k$, where
\begin{eqnarray*}
{\mathcal N_k}&=&\{\alpha\in\mathcal N_{k-1}\setminus \widetilde{\mathcal F}_k\ :\ u(Z_k(\alpha),\alpha)=0\quad\mbox{and}\quad (-1)^ku'(Z_k(\alpha),\alpha)>0\},\\
{\mathcal G_k}&=&\{\alpha\in\mathcal N_{k-1}\setminus \widetilde{\mathcal F}_k\ :\ u(Z_k(\alpha),\alpha)=0\quad\mbox{and}\quad u'(Z_k(\alpha),\alpha)=0\},\\
{\mathcal P_k}&=&\widetilde{\mathcal F}_k\cup
{{\mathcal F}_k}.
\end{eqnarray*}

\begin{rem}
	\label{language} Note that $u(\cdot,\alpha^*)$ is a $k$-th bound state if and only if $\alpha^*\in\mathcal G_k$.
\end{rem}
Concerning these sets we summarize next all the properties that rely only in the structural assumptions $(f_1)$-$(f_2)$ for $f$ and $(q_1)$-$(q_2)$ for the weight $q$.

\begin{proposition}
\label{facts1}
Assume that $f$ and $q$ satisfy the basic assumptions $(f_1)$-$(f_2)$  and $(q_1)$-$(q_2)$ respectively.
\begin{itemize}
\item[(1)]The sets ${\mathcal N_k}$ and ${\mathcal P_k}$ are open.
\item[(2)]Let $\alpha^*\in\mathcal G_k$, $k\ge 2$. Then there exists  $\delta_0>0$ such that $(\alpha^*-\delta_0, \alpha^*+\delta_0)\subseteq \mathcal{N}_{k-1}\setminus \widetilde{\mathcal F}_k$.
\end{itemize}
\end{proposition}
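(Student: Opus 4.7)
The central ingredient is continuous dependence of $(u(\cdot,\alpha),u'(\cdot,\alpha))$ on the initial datum $\alpha$, uniformly on every compact subinterval of $[0,\infty)$ on which the solution is defined; this is standard for the first-order system associated with \eqref{ivp}. The plan for (1) is to verify that every ``event'' appearing in the definitions of $\mathcal N_k$ and $\mathcal P_k$ is transversal, hence persists under small perturbations of $\alpha$. I proceed by induction on $k$.

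Fix $\alpha_0\in\mathcal N_k$. The solution $u(\cdot,\alpha_0)$ has transversal zeros $Z_1(\alpha_0)<\cdots<Z_k(\alpha_0)$ and intermediate extrema $T_j(\alpha_0)$, $j=1,\dots,k-1$. At each $T_j(\alpha_0)$ the equation gives $u''(T_j,\alpha_0)=-f(u(T_j,\alpha_0))$. Using $\mathcal N_k\subset\mathcal N_{j+1}$ together with the monotonicity of the energy $I$ on $[T_j,Z_{j+1}]$ from Proposition \ref{basic1}(i), one checks that $I(T_j,\alpha_0)=2F(u(T_j,\alpha_0))>0$ (otherwise $I(Z_{j+1},\alpha_0)\le 0$ would force $u'(Z_{j+1},\alpha_0)=0$, contradicting transversality), so $|u(T_j,\alpha_0)|>\beta>b$ and $u''(T_j,\alpha_0)\ne 0$. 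Successive applications of the implicit function theorem at each $T_j(\alpha_0)$ and $Z_j(\alpha_0)$ then produce, on a neighborhood of $\alpha_0$, matching transversal extrema and zero crossings for $u(\cdot,\alpha)$, so $\alpha\in\mathcal N_k$. The same reasoning handles $\alpha_0\in\mathcal F_k\subset\mathcal P_k$: at $Z_k(\alpha_0)$ one has $u'=0$, $u\ne 0$, and an analogous energy comparison yields $u''(Z_k,\alpha_0)\ne 0$, so this extremum is transversal and persists.

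The delicate case is $\alpha_0\in\widetilde{\mathcal F}_k$, since its defining condition involves the entire half-line $(Z_{k-1}(\alpha),\infty)$ rather than a finite-time event. Here $u(\cdot,\alpha_0)$ is monotone past $Z_{k-1}(\alpha_0)$ and, by Proposition \ref{basic1}(iv), tends to some nonzero $\ell$ with $f(\ell)=0$. My plan is to pick $R\gg Z_{k-1}(\alpha_0)$ at which $u(R,\alpha_0)$ is close to $\ell$ and $u'(R,\alpha_0)$ close to $0$, transfer these estimates via continuous dependence to $u(R,\alpha)$ and $u'(R,\alpha)$ for $\alpha$ near $\alpha_0$, and then run a phase-plane comparison localized near the equilibrium $\ell$ to conclude that $u(\cdot,\alpha)$ either remains monotone with $(-1)^k u'<0$ (so $\alpha\in\widetilde{\mathcal F}_k$) or turns back without ever reaching zero (so $\alpha\in\mathcal F_k$); in either case $\alpha\in\mathcal P_k$. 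I expect this uniform-in-$\alpha$ asymptotic control to be the main technical obstacle.

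For (2), let $\alpha^*\in\mathcal G_k$. Since $u(\cdot,\alpha^*)$ has $k-1$ transversal zeros before the double zero at $Z_k(\alpha^*)$, one has $\alpha^*\in\mathcal N_{k-1}$, and part (1) yields a neighborhood $(\alpha^*-\delta_1,\alpha^*+\delta_1)\subset\mathcal N_{k-1}$. It remains to exclude $\widetilde{\mathcal F}_k$. Monotonicity of $I$ gives $I(T_{k-1},\alpha^*)\ge I(Z_k,\alpha^*)=0$, i.e.\ $|u(T_{k-1},\alpha^*)|\ge\beta$; strict inequality follows from $I'=-2(q'/q)(u')^2$, because $I\equiv 0$ on $[T_{k-1},Z_k]$ would force $u$ constant there, contradicting $u(Z_k,\alpha^*)=0$. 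Hence $u''(T_{k-1},\alpha^*)=-f(u(T_{k-1},\alpha^*))\ne 0$, so $u'(\cdot,\alpha^*)$ crosses zero transversally at $T_{k-1}(\alpha^*)$, and continuous dependence supplies $\delta_0\in(0,\delta_1]$ such that for every $\alpha\in(\alpha^*-\delta_0,\alpha^*+\delta_0)$ the derivative $u'(\cdot,\alpha)$ still vanishes near $T_{k-1}(\alpha^*)$, whence $\alpha\notin\widetilde{\mathcal F}_k$.
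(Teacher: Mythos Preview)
The paper does not actually prove this proposition; it only states that the proof is ``a straight forward adaptation of the one given in \cite{cghy1}.'' Your approach via continuous dependence and transversality of the successive zeros and extrema is exactly the standard route such arguments take, and your treatment of $\mathcal N_k$ and of part~(2) is sound.

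Two points deserve sharpening. For $\alpha_0\in\mathcal F_k$, the phrase ``an analogous energy comparison yields $u''(Z_k,\alpha_0)\ne 0$'' does not work: here $I(Z_k,\alpha_0)=2F(u(Z_k,\alpha_0))$ may well be negative, and no energy monotonicity forces $|u(Z_k,\alpha_0)|>b$. The correct reason is backward uniqueness of the ODE: if $f(u(Z_k,\alpha_0))=0$ then, since $u(Z_k,\alpha_0)\ne 0$ and $f$ is $C^1$ away from the origin, the constant $u(Z_k,\alpha_0)$ solves the equation with the same Cauchy data at $Z_k$, whence $u\equiv u(Z_k,\alpha_0)$, contradicting $u(Z_{k-1},\alpha_0)=0$. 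For $\alpha_0\in\widetilde{\mathcal F}_k$, the phase-plane plan you sketch is heavier than needed and left incomplete. A cleaner route uses the energy directly: by Proposition~\ref{basic1}(iv) the limit $\ell$ is a nonzero root of $f$, hence $0<|\ell|\le b<\beta$ and $F(\ell)<0$; thus $I(r,\alpha_0)\to 2F(\ell)<0$, so one can fix $R$ with $I(R,\alpha_0)<0$, transfer this by continuous dependence to nearby $\alpha$, and use the monotonicity of $I$ to obtain $I(r,\alpha)<0$ for all $r\ge R$. Since $u=0$ would give $I=|u'|^2\ge 0$, the perturbed solution never vanishes on $[R,\infty)$, and compact-interval control on $[Z_{k-1}(\alpha),R]$ then shows $(-1)^k u(\cdot,\alpha)<0$ on $(Z_{k-1}(\alpha),\infty)$, hence $\alpha\in\mathcal P_k$.
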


The proof of this proposition is a straight forward adaptation of the one given in \cite{cghy1}.

\subsection{Behavior of the function $\varphi(r,\alpha)=\frac{\partial}{\partial\alpha}u(r,\alpha)$}\label{balpha}\mbox{ }\\

We state next some basic properties of the  first variation of $u$. To this end, $\alpha^*\in\mathcal G_k$ is fixed and  $\alpha\in(\alpha^*-\delta_0,\alpha^*+\delta_0)$, where $\delta_0>0$ is given in Proposition \ref{facts1}(2).

Under assumptions $(f_1)$-$(f_2)$, the
functions $u(r,\alpha)$ and $u'(r,\alpha)=\frac{\partial u}{\partial r}(r,\alpha)$ are of class
$C^1$ in $(0,\infty)\times(b,\infty)$.
We set
$$\varphi(r,\alpha)=\frac{\partial u}{\partial\alpha}(r,\alpha).$$
Then, for any $r>0$ such that $u(r)\neq0$, $\varphi$ satisfies the linear
differential equation
\begin{eqnarray}\label{varphi-eq}
\begin{gathered}
\varphi''(r,\alpha)+\frac{q'}{q}\varphi'(r,\alpha)+ f'(u(r,\alpha))\varphi(r,\alpha)=0,
\quad \\
\varphi(0,\alpha)=1\quad \varphi'(0,\alpha)=0,
\end{gathered}\end{eqnarray}
where $'=\frac{\partial }{\partial r}$.

For simplicity of the notation we set
$$u(r)=u(r,\alpha),\qquad\varphi(r)=\varphi(r,\alpha).$$

The following proposition extends the results contained in Propositions 3.1 and 3.2 and Lemma 4.1 in \cite{cghy2} regarding the case $q=r^{n-1}$. The first part localizes the zeros of $\varphi$, the second takes care of the sublinear case, and states that the first zero of $\varphi$ must be after the solution $u$ crosses the value $b$. Finally for  $\alpha^*\in\mathcal G_k$, the third part   deals with
the existence of  a neighborhood $V$ of $\alpha^*$ such that any solution to \eqref{ivp} with $\alpha\in V$
has its  extremal values with absolute value greater than $\beta$, and any two solutions in this neighborhood intersect between their consecutive extremal points. Since its proof is  a step by step modification of the ones in \cite{cghy2} we omit it.

\begin{proposition}\label{facts2}Let $f$ and $q$ satisfy  $(f_1)$-$(f_2)$ and  $(q_1)$-$(q_2)$ respectively.
\begin{itemize}
		\item [(1)]
 Between two consecutive zeros $r_1< r_2$ of $u'$ there is at least one zero  of $\varphi$.
 Furthermore, if  $\alpha\in\mathcal G_k$, then   $\varphi$ has at least one  zero in $(T_{k-1}(\alpha),Z_k(\alpha))$.
\item [(2)]If in addition $f$ satisfies the sublinear assumption  $(f_5)$, then $\varphi$ is strictly positive
in $(0,  r(b,\alpha))$.
\end{itemize}
 As a consequence of (1) we have
\begin{itemize}
\item [(3)] Let $\alpha^*\in\mathcal G_k$. Then, there exist $\eta>0$ and $\delta_1>0$,
	such that for any $\alpha\in(\alpha^*-\delta_1,\alpha^*+\delta_1)$, $u(\cdot, \alpha)$ has exactly $k$ extremal points in $[0, T_{k-1}(\alpha^*)+\eta]$. Due to continuity of the functional $I$  in \eqref{I}, the extremal values  of $u(\cdot,\alpha)$ satisfy
	$m<-\beta$ if $m $ is a minimum value, while $M>\beta$ if $M$  a maximum value. Moreover, if $\alpha_1<\alpha_2$ are two values in
	$(\alpha^*-\delta_1,\alpha^*+\delta_1)$, then
	\begin{enumerate}
		\item[(i)]  the corresponding solutions $u_1$ and $u_2$ intersect between any two of their consecutive extremal points, and
		\item[(ii)] there exists an intersection point in $(T_{k-1}(\alpha^*),Z_k(\alpha^*))$.
	\end{enumerate}
\end{itemize}
\end{proposition}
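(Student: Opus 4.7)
The three parts are proved, in turn, by constructing a Wronskian--type functional adapted to the weight $q$; the structure closely parallels the unweighted case $q=r^{n-1}$ in \cite{cghy2}. For part (1), I put both equations in self--adjoint form:
$$(q\varphi')'+qf'(u)\varphi=0,\qquad (qu'')'+q\Bigl[\Bigl(\frac{q'}{q}\Bigr)'+f'(u)\Bigr]u'=0,$$
the second obtained by differentiating $(qu')'+qf(u)=0$. Multiplying the first by $u'$, the second by $\varphi$, subtracting and integrating on $[r_1,r_2]$ by parts twice, the boundary terms using $u'(r_i)=0$ collapse to
$$q(r_1)u''(r_1)\varphi(r_1)-q(r_2)u''(r_2)\varphi(r_2)\;=\;\int_{r_1}^{r_2}q\Bigl(\frac{q'}{q}\Bigr)'u'\varphi\,dr.$$
Assuming for contradiction that $\varphi$ keeps one sign on $[r_1,r_2]$, the right side has a definite sign because $(q'/q)'<0$ by $(q_2)$ and $u'$ keeps one sign on $(r_1,r_2)$, while the left side has the opposite sign since $u''(r_i)=-f(u(r_i))$ carries the sign that makes $r_i$ a genuine extremum of $u$. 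For the \emph{furthermore} claim, $\alpha\in\mathcal G_k$ forces $u'(Z_k(\alpha))=0$, so $T_{k-1}(\alpha)$ and $Z_k(\alpha)$ are consecutive zeros of $u'$; at $Z_k(\alpha)$ the boundary term vanishes since $u''(Z_k(\alpha))=-f(0)=0$, but the surviving contribution at $T_{k-1}(\alpha)$ still contradicts the sign of the integral.

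For part (2), I exploit $(f_5)$ through the shifted functional
$$\widetilde W(r):=q(r)\bigl[u'(r)\varphi(r)-(u(r)-b)\varphi'(r)\bigr].$$
A direct calculation with \eqref{ivp} and \eqref{varphi-eq} yields
$$\widetilde W'(r)=q(r)\varphi(r)\bigl[(u(r)-b)f'(u(r))-f(u(r))\bigr],$$
and the bracket is non--positive on $\{u\ge b\}$ by $(f_5)$. Since $q(0)=0$, $\widetilde W(0)=0$. If $\varphi$ had a first zero $r^*\in(0,r(b,\alpha))$, then $\widetilde W\le 0$ on $[0,r^*]$; but $\varphi(r^*)=0$ with $\varphi'(r^*)<0$ (strictly, else $\varphi\equiv 0$ by uniqueness for \eqref{varphi-eq}) and $u(r^*)>b$ force $\widetilde W(r^*)=-q(r^*)(u(r^*)-b)\varphi'(r^*)>0$, a contradiction.

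For part (3), the energy \eqref{I} satisfies $I'=-2(q'/q)(u')^2\le 0$ with strict decrease wherever $u'\ne 0$. Since $\alpha^*\in\mathcal G_k$ gives $I(Z_k(\alpha^*))=0$, strict monotonicity of $I$ on the oscillation intervals of $u(\cdot,\alpha^*)$ forces $I(r_i)=2F(m_i^*)>0$ at each of its $k$ extrema, hence $|m_i^*|>\beta$. Continuous dependence on $\alpha$ together with the non--degeneracy $u''(r_i)\ne 0$ then yields $\eta,\delta_1>0$ such that every $u(\cdot,\alpha)$ with $\alpha\in(\alpha^*-\delta_1,\alpha^*+\delta_1)$ has exactly $k$ extrema on $[0,T_{k-1}(\alpha^*)+\eta]$ with extremal values of modulus exceeding $\beta$. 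The intersection claim (i) is obtained by contradiction: if $u_1$ and $u_2$ did not intersect between two consecutive extrema, then writing $u_2-u_1=\int_{\alpha_1}^{\alpha_2}\varphi(\cdot,\bar\alpha)\,d\bar\alpha$ and combining with the zero of $\varphi(\cdot,\bar\alpha)$ in that interval supplied by part (1), an intermediate--value argument in $\bar\alpha$ yields the required contradiction; (ii) is (i) applied to $(T_{k-1}(\alpha^*),Z_k(\alpha^*))$. The principal technical obstacle is the discovery of the correct shift in $\widetilde W$ in part (2), since precisely that shift allows the sublinear hypothesis $(f_5)$ to enter through the non--positivity of $(u-b)f'(u)-f(u)$; beyond this, the proof is careful sign--bookkeeping in the boundary terms, with $q$ playing the role that the Euclidean factor $r^{n-1}$ plays in \cite{cghy2}.
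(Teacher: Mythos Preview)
The paper does not prove this proposition; it writes that the proof ``is a step by step modification of the ones in \cite{cghy2}'' and omits it. Your proposal carries out exactly that modification and therefore matches the paper's intended route. Parts (1) and (2) are correct; the shifted Wronskian $\widetilde W$ you use in (2) is precisely the identity the paper itself employs later (with a different shift $u(r_1)$ in place of $b$) in the proof of Lemma~\ref{g0}, see~\eqref{part1}, which confirms you have located the right functional.

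Two points deserve tightening. First, in (3)(i) the phrase ``intermediate--value argument in $\bar\alpha$'' is garbled: the IVT is in $r$, not in $\alpha$. The clean version is that, by (1), $\varphi(\cdot,\alpha^*)$ changes sign at some $r_i^*$ in each interval between consecutive zeros of $u'$; pick $r_-<r_i^*<r_+$ with $\varphi(r_\pm,\alpha^*)$ of opposite nonzero signs, use continuity in $(r,\alpha)$ to keep these signs for all $\alpha\in[\alpha_1,\alpha_2]$ once $\delta_1$ is small, and conclude that $u_2(r_\pm)-u_1(r_\pm)=\int_{\alpha_1}^{\alpha_2}\varphi(r_\pm,\alpha)\,d\alpha$ have opposite signs, whence the intersection. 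Second, in the ``furthermore'' clause of (1) you tacitly assume $Z_k(\alpha)<\infty$; when $Z_k(\alpha)=\infty$ (as for ground states), the boundary contribution at $Z_k$ must be handled as a limit, or one may instead invoke Sturm comparison on the half--line between $\varphi$ and $u'$, noting that $u'$ satisfies the same equation as $\varphi$ but with the strictly smaller potential $f'(u)+(q'/q)'$.
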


\medskip

Our next result is an extension of \cite{cfe2, cgh1} and is the first property that needed some non trivial  assumptions on the weight $q$.

From now on, we are working in $(\alpha^*-\delta_1,\alpha^*+\delta_1)$.
\begin{lemma}\label{g0}
Let $f$ satisfy $(f_1)$-$(f_2)$ and  $(f_4)$,  let $q$ satisfy $(q_1)$-$(q_2)$ and $(q_4)$, and let $\alpha\in\mathcal G_k$. If  $\varphi$ has a first zero at $r_1\in(0,r(b,\alpha))$ then
the function $r\to\displaystyle\frac{hu'}{u}$ is strictly decreasing in $(r_1,r(b,\alpha))$, where $h$ is defined in $(q_4)$.
\end{lemma}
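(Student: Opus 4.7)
My plan is to analyze $\Psi(r):=h(r)u'(r)/u(r)$ via a Riccati-type identity and then show the relevant quadratic in $\Psi$ stays strictly positive on $(r_1,r(b,\alpha))$. The starting point is the key identity that follows directly from the definition of $h$ in $(q_4)$: since $h(r)=q(r)\int_r^\infty ds/q(s)$, differentiation gives $h'=(q'/q)\,h-1$, equivalently $hq'/q=h'+1$. Combining this with the ODE $u''=-(q'/q)u'-f(u)$ and simplifying the derivative of $\Psi$, a direct computation gives
\[
\Psi'(r)=-\frac{u'(r)}{u(r)}-\frac{h(r)f(u(r))}{u(r)}-\frac{h(r)(u'(r))^2}{u(r)^2},
\]
which is equivalent to the Riccati form
\[
h(r)\Psi'(r)+\Psi(r)^2+\Psi(r)+\frac{h(r)^2 f(u(r))}{u(r)}=0.
\]
Hence $\Psi'(r)<0$ is equivalent to showing that the quadratic $\Psi^2+\Psi+h^2 f(u)/u$ is strictly positive at each $r\in(r_1,r(b,\alpha))$.

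Next I would dispose of the ``easy'' regime $4h^2 f(u)/u>1$, where the discriminant $1-4h^2 f(u)/u$ of this quadratic in $\Psi$ is negative and hence the quadratic is automatically positive. Equivalently, after dividing $\Psi'<0$ through by $-u'u>0$ (recall $u>b>0$ and $u'<0$ on this interval), the inequality reads $h\bigl(f(u)/|u'|+|u'|/u\bigr)>1$, which follows from AM--GM as soon as $2h\sqrt{f(u)/u}\ge 1$. In the complementary regime, the two real roots $\Psi_-<\Psi_+$ of the quadratic both lie in $(-1,0)$, and I must show $\Psi(r)\notin(\Psi_-(r),\Psi_+(r))$.

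Ruling out the bad regime is where the hypothesis $\varphi(r_1)=0$ is used. The natural tool is the Wronskian $g:=u\varphi'-u'\varphi$: multiplying the $\varphi$-equation by $u$, the $u$-equation by $\varphi$, subtracting, and using $g(0)=0$, one obtains $(qg)'=q\varphi(f(u)-uf'(u))$ and hence
\[
q(r)g(r)=\int_0^r q(s)\varphi(s)\bigl(f(u(s))-u(s)f'(u(s))\bigr)\,ds.
\]
Since $\varphi>0$ on $(0,r_1)$ and $\varphi'(r_1)<0$, one has $g(r_1)=u(r_1)\varphi'(r_1)<0$. Combined with $(f_4)$ (under which $uf'(u)/f(u)$ is an increasing function of $r$ on $(0,r(b,\alpha))$, since $s\mapsto sf'(s)/f(s)$ decreases and $u$ decreases), this integral constraint yields a pointwise estimate on $\Psi$ at $r_1$ that places it outside $(\Psi_-(r_1),\Psi_+(r_1))$.

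The main obstacle is propagating this estimate forward on $(r_1,r(b,\alpha))$: I expect to argue by comparison between the Riccati equation for $\Psi$ and the analogous one satisfied by $h\varphi'/\varphi$ (the same Riccati but with $f(u)/u$ replaced by $f'(u)$), using the monotonicities built into $(q_4)$ (for the coefficients in $r$) and into $(f_4)$ (which controls the sign of $f(u)/u-f'(u)$) to ensure that once $\Psi$ lies to the left of $\Psi_-(r)$ it cannot re-enter the bad interval. Turning this integral-to-pointwise transfer into a clean inequality is the delicate technical step; the Riccati form above and the identity $hq'/q=h'+1$ are what make the computation tractable.
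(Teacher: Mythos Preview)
Your Riccati identity $h\Psi'+\Psi^2+\Psi+h^2f(u)/u=0$ is correct, and the reduction to showing $\Psi\notin(\Psi_-,\Psi_+)$ when the discriminant is nonnegative is fine. The gap is in the two steps you flag as delicate: they are not just delicate, they are not actually carried out. The Wronskian $g=u\varphi'-u'\varphi$ together with $(f_4)$ does \emph{not} give a pointwise estimate on $\Psi(r_1)=h(r_1)u'(r_1)/u(r_1)$; what that integral identity yields (and this is exactly what the paper extracts from it) is the inequality $u(r_1)f'(u(r_1))/f(u(r_1))>1$, a constraint on $f$ along the orbit, not a bound on $u'(r_1)/u(r_1)$. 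There is no direct route from this to $\Psi(r_1)<\Psi_-(r_1)$. Likewise the proposed propagation by comparing with $h\varphi'/\varphi$ is problematic: $\varphi$ vanishes at $r_1$, so that quotient blows up precisely where you want to initialize, and in any case $(f_4)$ does not pin down the sign of $f(u)/u-f'(u)$ globally (only the monotonicity of $sf'(s)/f(s)$), so the sign comparison between the two Riccati equations is not available.

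The paper avoids the local Riccati picture altogether. Writing $w=hu'$, so that $u^2(\Psi)'=uw'-wu'$, it derives the integral representation
\[
q(r)(u'w-uw')(r)=q(r)h(r)\tilde H(u(r))+\int_0^r q\Bigl[2\,\frac{(qh)'}{q}F_0(u)-uf(u)\Bigr]\,dt,
\]
with $\tilde H(s)=sf(s)-2F_0(s)$ and $F_0(s)=\int_b^s f$. The Wronskian argument is used only to get $sf'(s)/f(s)>1$ on $(b,u(r_1))$, which makes $\tilde H>0$ there. The remaining integral is handled by a monotone auxiliary function $\hat G(s)=2(2h'(r(s))+1)F_0(s)/(sf(s))-1$: one shows $\hat G$ increases from $-1$ at $s=b$, and the case $\hat G<0$ throughout is excluded by a boundary argument using $(hu'+u)'=-hf(u)$ on $(r(b,\alpha),Z_1(\alpha))$ together with $\lim_{r\to Z_1(\alpha)}(hu'+u)\le0$. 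That last step brings in global information at $Z_1(\alpha)$, which your purely local Riccati scheme has no mechanism to exploit; this is likely why the pointwise approach stalls.
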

\begin{proof}
 We will first show that
\begin{eqnarray}\label{pr1}\frac{u(r_1)f'(u(r_1))}{f(u(r_1))}>1.
\end{eqnarray}
 Indeed, if not, then by $(f_4)$ we have that
 $$\frac{u(r)f'(u(r))}{f(u(r))}< 1\quad\mbox{for all $r\in(0,r_1)$},$$
 By multiplying \eqref{varphi-eq} by $q(u-u(r_1))$
	 and integrating
by parts over $(0,r)$, $r\le  r_1$, we have that
$$q\varphi'(r)(u-u(r_1))-\int_0^{ r}q(t)u'(t)\varphi'(t)dt+
\int_0^{ r}q(t)f'(u(t))\varphi(t)(u(t)-u(r_1)) dt=0,$$
and integrating again by parts the first integral above yields
\begin{equation}\label{part1}
\int_0^{ r}\Bigl(f'(u(t))(u(t)-u(r_1))-f(u(t))\Bigr)
\varphi(t)q(t) dt\!=\!q(r)(u'(r)\varphi(r)-\varphi'(r)(u(r)-u(r_1))).
\end{equation}
We observe that for $t\in(0,r_1)$ such that $f'(u(t))\ge 0$,
$$0>f'(u(t))u(t)-f(u(t))\ge f'(u(t))(u(t)-u(r_1))-f(u(t))$$
and for points $t\in(0,r_1)$ such that $f'(u(t))<0$,
$$0>f'(u(t))(u(t)-u(r_1))-f(u(t)),$$
hence in any case
\begin{eqnarray*}
0&>&  \int_0^{ r_1}\Bigl(f'(u(t))(u(t)-u(r_1))-f(u(t))\Bigr)
\varphi(t)q dt\\
&=&q(r_1)(u'(r_1)\varphi(r_1)-\varphi'(r_1)(u(r_1)-u(r_1)))=0,
\end{eqnarray*}
a contradiction. Hence, for $u\in(b,u(r_1,\alpha))$ it holds that
$$\frac{uf'(u)}{f(u)}>1,$$
implying that the function $\tilde H(u)=uf(u)-2\int_b^uf(t)dt$ is strictly increasing in $(b,u(r_1,\alpha))$ and as $\tilde H(b)=0$, we obtain that $\tilde H(u)>0$ in this interval.

Set now $w=hu'$. Then
$$u^2\Bigl(\frac{hu'}{u}\Bigr)'(r)=uw'(r)-wu'(r)$$
and after some computations we have that
\begin{eqnarray*}
q(u'w-uw')(r)&=&\int_0^r((qu')'w(t)-(qw')'u(t))dt\\
&=&\int_0^r(qhu'(t)(u(t)f'(u(t))-f(u(t)))+2qu(t)f(u(t))h'(t))dt\\
&=&qh\tilde H(u)(r)+\int_0^rq(t)\Bigl[2\frac{(qh)'(t)}{q(t)}F_0(u(t))-u(t)f(u(t))\Bigr]dt\\
&>&\int_0^rq(t)\Bigl[2\frac{(qh)'(t)}{q(t)}F_0(u(t))-u(t)f(u(t))\Bigr]dt
\end{eqnarray*}
for all $r\in(r_1,r(b,\alpha))$, where we have set $F_0(u)=\int_b^uf(t)dt$.
Set now
$$\hat G(s)=2\frac{(qh)'(r(s,\alpha))}{q(r(s,\alpha))}\frac{F_0(s)}{sf(s)}-1=2(2h'(r(s,\alpha))+1)\frac{F_0(s)}{sf(s)}-1.$$
We claim that
\begin{eqnarray}\label{lim0}\lim_{s\to b^+}\frac{F_0(s)}{f(s)}=\lim_{s\to b^+}\frac{f(s)}{f'(s)}=0.
\end{eqnarray}
Indeed, since $\frac{f(s)}{sf'(s)}$ is increasing for $s>b$, $\lim\limits_{s\to b^+}\frac{f(s)}{sf'(s)}=\frac{1}{b}\lim\limits_{s\to b^+}\frac{f(s)}{f'(s)}$ exists and it is nonnegative proving the first equality in \eqref{lim0} by L'H\^opital's rule.   Assume next that
$$\lim_{s\to b^+}\frac{f(s)}{f'(s)}=L>0.$$
 Then, $f'(s)$ is strictly positive in some right neighborhood of $b$, thus by the definition of $F_0$ we get
 $$0\le \lim_{s\to b^+}\frac{F_0(s)}{sf(s)}\le\lim_{s\to b^+}\frac{f(s)(s-b)}{sf(s)}=0,$$
a contradiction and thus \eqref{lim0} follows.
Hence,
 we have that $\hat G(b)=-1$.  From $(q_4)$, $h'(r(\cdot,\alpha))$ is positive and increasing, and from $(f_4)$ and \eqref{lim0}, also $\frac{F_0(s)}{sf(s)}$ is increasing. Hence $\hat G$ is increasing. Assume that $\hat G(s)<0$ for all $s\in(b,\alpha)$.
\begin{eqnarray*}
0&\ge& \int_0^{r(b,\alpha)}q(t)\Bigl[2\frac{(qh)'(t)}{q(t)}F_0(u(t))-u(t)f(u(t))\Bigr]dt\\
&=&q(wu'-uw')(r(b,\alpha))=qu'(r(b,\alpha))(hu'+u)(r(b,\alpha)).
\end{eqnarray*}
But
$$(hu'+u)'=(h'-\frac{q'}{q}h)u'-hf(u)+u'=-hf(u)>0$$
for $r\in(r(b,\alpha),Z_1(\alpha))$ and $\lim_{r\to Z_1(\alpha)}(hu'(r)+u(r))\le 0$, implying
$$(hu'+u)(r(b,\alpha))<0,$$
a contradiction.

We conclude that there exists $s_1\in(b,\alpha)$ such that $\hat G(s_1)=0$, $\hat G(s)<0$ in $(b,s_1)$ and $\hat G(s)>0$ in $(s_1,\alpha)$. Then, if $r$ is such that $u(r)\ge s_1$ we have that
$$\int_0^rq(t)\Bigl[2\frac{(qh)'(t)}{q(t)}F_0(u(t))-u(t)f(u(t))\Bigr]dt>0$$
and if $r$ is such that $u(r)<s_1$ then
\begin{eqnarray*}
\int_0^rq(t)\Bigl[2\frac{(qh)'(t)}{q(t)}F_0(u(t))-u(t)f(u(t))\Bigr]dt&>&\int_0^{r(b,\alpha)}q(t)\Bigl[2\frac{(qh)'(t)}{q(t)}F_0(u(t))-u(t)f(u(t))\Bigr]dt\\
&=&q(wu'-uw')(r(b,\alpha))>0.
\end{eqnarray*}

\end{proof}

\begin{proposition}\label{varphi3}
Let $f$ satisfy $(f_1)$-$(f_2)$ and $(f_4)$ and let $q$ satisfy $(q_1)$-$(q_2)$ and $(q_4)$. Assume  $\varphi$ has a first zero $r_1\in(0,r(\beta,\alpha)]$. Then,
$\varphi(r)<0$ for $r\in(r_1,r(b,\alpha)]$ and $(\varphi+h\varphi')(r(b,\alpha))<0$. Furthermore,
 if $f$ satisfies $(f_6)$,  then
 $\varphi'(r(b,\alpha))<0$.
\end{proposition}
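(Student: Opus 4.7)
The idea is to combine ODE uniqueness for the linear equation \eqref{varphi-eq}, the strict decay of $hu'/u$ on $(r_1,r(b,\alpha))$ from Lemma \ref{g0}, and two integral identities obtained by multiplying \eqref{varphi-eq} by judiciously chosen factors.

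First, by uniqueness for \eqref{varphi-eq}, $\varphi'(r_1)<0$, so $\varphi<0$ in a right neighborhood of $r_1$. Using $h'=hq'/q-1$ (from the definition of $h$ in $(q_4)$), direct computation yields the pointwise identities
\[
(u+hu')'=-hf(u),\qquad (\varphi+h\varphi')'=-hf'(u)\varphi.
\]
Multiplying the $\varphi$-equation by $qhu'$ and integrating by parts on $(0,r)$, after using $(f(u)\varphi)'=f'(u)u'\varphi+f(u)\varphi'$ and $(qh)'=2qh'+q$, produces
\[
q(r)u'(r)\bigl[\varphi(r)+h(r)\varphi'(r)\bigr]+q(r)h(r)f(u(r))\varphi(r)=2\int_0^r q(t)h'(t)f(u(t))\varphi(t)\,dt,
\]
while multiplication by $qu$ and the same procedure give
\[
q(r)\bigl[u(r)\varphi'(r)-u'(r)\varphi(r)\bigr]=\int_0^r q(t)\varphi(t)\bigl(f(u(t))-u(t)f'(u(t))\bigr)\,dt.
\]
From Lemma \ref{g0}'s proof, $uf'(u)>f(u)>0$ on $(b,u(r_1))$, and in particular $f'(u)>0$ on $(r_1,r(b,\alpha))$.

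The first two claims I would handle together by contradiction: assume either that $\varphi$ has a second zero in $(r_1,r(b,\alpha)]$ or that $(\varphi+h\varphi')(r(b,\alpha))\ge 0$, and let $r^{**}\in(r_1,r(b,\alpha)]$ be the smallest point where $\varphi+h\varphi'=0$. A continuity argument combined with the positivity of $-hf'(u)\varphi$ on $(r_1,r^{**})$ shows that $\varphi<0$ on $[r_1,r^{**}]$ and $\varphi+h\varphi'$ is strictly increasing there. At $r^{**}$ the first identity simplifies to
\[
q(r^{**})h(r^{**})f(u(r^{**}))\varphi(r^{**})=2\int_0^{r^{**}}qh'f(u)\varphi\,dt,
\]
whose LHS is strictly negative when $r^{**}<r(b,\alpha)$ and zero when $r^{**}=r(b,\alpha)$; the RHS splits into a strictly positive part on $(0,r_1)$ (where $\varphi>0$) and a strictly negative part on $(r_1,r^{**})$. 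At the same point $\varphi'(r^{**})=-\varphi(r^{**})/h(r^{**})$, so the Wronskian appearing in the second identity can be rewritten as $-\varphi(r^{**})\bigl(u+hu'\bigr)(r^{**})/h(r^{**})$; the strict decay of $hu'/u$ from Lemma \ref{g0} then gives a quantitative control on $(u+hu')(r^{**})$, which when fed back into the balance between the two identities produces the sought contradiction (a Picone-type calculation for the pair $(u,\varphi)$ on $[r_1,r^{**}]$ encapsulates this control).

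For (c) assume $(f_6)$. By $(f_4)$ and $(f_6)$, $uf'(u)/f(u)\le 1+2\ell_\infty$ for all $u\ge\beta$, i.e.\ $uf'(u)-f(u)\le 2\ell_\infty f(u)$ on this range. Evaluating the second identity at $R:=r(b,\alpha)$:
\[
q(R)\bigl[b\varphi'(R)-u'(R)\varphi(R)\bigr]=\int_0^R q\varphi\bigl(f(u)-uf'(u)\bigr)\,dt,
\]
the term $-u'(R)\varphi(R)$ is strictly positive by (a) and $u'(R)<0$; splitting the integral at $r(\beta,\alpha)$, the contribution on $(r(\beta,\alpha),R)$ is controlled via $(f_4)$ alone, while the contribution on $(r_1,r(\beta,\alpha))$ is controlled by the bound $uf'-f\le 2\ell_\infty f$ coming from $(f_6)$, and the contribution on $(0,r_1)$ is identified with $q(r_1)u(r_1)\varphi'(r_1)<0$ via the second identity at $r_1$. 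These bounds combine to give a strict sign that forces $b\varphi'(R)<0$, and hence $\varphi'(R)<0$.

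The main obstacle is the quantitative step in (a)/(b): turning the qualitative monotonicity of $hu'/u$ from Lemma \ref{g0} into the sharp inequality at $r^{**}$ that contradicts the identity there, since standard Sturm-type arguments fail because the coefficient $qf'(u)$ of $\varphi$'s equation exceeds the coefficient $qf(u)/u$ of $u$'s equation on $(r_1,r(b,\alpha))$ in the wrong direction.
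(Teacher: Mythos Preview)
You have correctly identified the relevant ingredients --- the monotonicity of $hu'/u$ from Lemma~\ref{g0}, the relation $h'=hq'/q-1$, and Wronskian-type identities between $\varphi$ and test functions built from $u$ and $hu'$ --- and you are also honest that the quantitative step is missing. That gap is real: your two integral identities amount to comparing $\varphi$ separately with $u$ and with $hu'$, and neither comparison alone produces a sign that is consistent on all of $(0,r^{**})$, precisely because (as you note) $f'(u)>f(u)/u$ on $(r_1,r(b,\alpha))$ pushes the naive Sturm argument the wrong way.

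The paper's proof supplies exactly the idea that closes this. Instead of comparing $\varphi$ with $u$ or $hu'$ separately, it introduces the \emph{linear combination}
\[
v(r)=h(r)u'(r)+C_1\,u(r),
\]
where $C_1>0$ is chosen so that the forcing term in the equation $v''+(q'/q)v'+f'(u)v=\phi$, namely
\[
\phi(r)=f(u(r))\Bigl[C_1\Bigl(\frac{u f'(u)}{f(u)}-1\Bigr)-2h'(r)\Bigr],
\]
vanishes \emph{exactly at} $r=r_1$. By $(f_4)$ and $(q_4)$ this makes $\phi\le 0$ on $(0,r_1)$ and $\phi\ge 0$ on $(r_1,r(b,\alpha))$; since $\varphi>0$ on $(0,r_1)$ and $\varphi<0$ just after $r_1$, one gets $(q(\varphi' v-v'\varphi))'=-q\phi\varphi\ge 0$ on all of $(0,r_2)$ if a second zero $r_2$ of $\varphi$ existed. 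This forces $v(r_1)<0$ and $v(r_2)>0$, which contradicts Lemma~\ref{g0}: the strict decrease of $hu'/u$ gives $v(r)=u(r)\bigl(hu'/u+C_1\bigr)<(u(r)/u(r_1))v(r_1)<0$ on $(r_1,r(b,\alpha))$. Thus $\varphi$ has no second zero, and the Wronskian inequality $\varphi v'-\varphi' v<0$ holds on $(0,r(b,\alpha)]$. Evaluating this at $r(b,\alpha)$, using $v'=-hf(u)+(C_1-1)u'$ and $v(r(b,\alpha))<-(C_1-1)hu'(r(b,\alpha))$ (from $(u+hu')(r(b,\alpha))<0$, which follows since $(u+hu')'=-hf(u)>0$ on $(r(b,\alpha),Z_1)$ and the limit at $Z_1$ is $\le 0$), yields $(\varphi+h\varphi')(r(b,\alpha))<0$ directly when $C_1<1$.

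For the last statement, the role of $(f_6)$ is simply that it forces $C_1\ge 1$ (since $u(r_1)\ge\beta$ gives $uf'/f-1\le 2\ell_\infty$ at $r_1$, while the right-hand side in the definition of $C_1$ is $2h'(r_1)\ge 2\ell_\infty$). Then $v'=-hf(u)+(C_1-1)u'<0$ on $(r_1,r(b,\alpha))$, and evaluating $\varphi v'-\varphi' v<0$ at $r(b,\alpha)$, where $\varphi<0$, $v<0$, $v'<0$, gives $\varphi'(r(b,\alpha))<0$ immediately. Your proposed integral-splitting argument for this part is not needed and, as written, is not sharp enough to conclude.
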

\begin{proof}
Let
$$\psi(C)=Cu(r_1)f'(u(r_1))-Cf(u(r_1))-2\Bigl[q'\int_{r_1}^\infty\frac{ds}{q(s)}-1\Bigr]f(u(r_1)),$$
that is,
$$\psi(C)=f(u(r_1))\Bigl(C\Bigl[\frac{u(r_1)f'(u(r_1))}{f(u(r_1))}-1\Bigr]-2\Bigl[q'\int_{r_1}^\infty\frac{ds}{q(s)}-1\Bigr]\Bigr)$$
and recall that by the definition of $h$ in $(q_4)$
$$q'\int_{r_1}^\infty\frac{ds}{q(s)}-1=h'(r)>0,$$
hence
$$\psi(0)=-2\Bigl[q'\int_{r_1}^\infty\frac{ds}{q(s)}-1\Bigr]<0.$$
From the proof of the above lemma (see \eqref{pr1}),
$$\frac{u(r_1)f'(u(r_1))}{f(u(r_1))}>1,$$
and we can define
 $C_1>0$  by
 $$C_1\Bigl[\frac{u(r_1)f'(u(r_1))}{f(u(r_1))}-1\Bigr]=2\Bigl[q'\int_{r_1}^\infty\frac{ds}{q(s)}-1\Bigr]\ge 2\ell_\infty$$
 by $(q_4)$.
 We note here for future use that if $f$ satisfies $(f_6)$ and $u(r_1)\ge\beta$, then
 $$\frac{u(r_1)f'(u(r_1))}{f(u(r_1))}\le \frac{\beta f'(\beta)}{f(\beta)},$$
 and
 $$\frac{u(r_1)f'(u(r_1))}{f(u(r_1))}-1\le \frac{\beta f'(\beta)}{f(\beta)}-1\le2\ell_\infty,$$
 implying $C_1\ge1$.
Let now
$$\phi(r):= f(u(r))\Bigl(C_1\Bigl[\frac{u(r)f'(u(r))}{f(u(r))}-1\Bigr]-2\Bigl[q'\int_{r}^\infty\frac{ds}{q(s)}-1\Bigr]\Bigr).$$
Then, $\phi(r_1)=0$, and, as $u$ decreases, by $(f_4)$ and $(q_4)$, $\phi$ is non positive in $(0,r_1)$ and nonnegative in $(r_1, r(b,\alpha))$.

Let us set $v(r)=h(r)u'(r)+ C_1u(r)$, and assume that $\varphi$ has a second zero at $r_2\in(r_1,r_b)$. As $v$ satisfies
$$v''+\frac{q'}{q}v'+ f'(u(r))v=\phi(r),$$
 from \eqref{varphi-eq} after multiplication by $\phi$ we have
$$(q(\varphi' v-v'\varphi))'=-q\phi\varphi\ge 0\quad\mbox{in}\quad (0,r_2).$$
Therefore, using $q(0)=0$ we have
\begin{eqnarray}\label{contra2}
\varphi(r)v'(r)-\varphi'(r)v(r)< 0,
\end{eqnarray}
implying in particular that $v(r_1)< 0$ and $v(r_2)>0$. But from Lemma \ref{g0},
$$v(r)=hu'(r)+C_1u(r)= u(r)\Bigl(\frac{hu'(r)}{u(r)}+C_1\Bigr)< u(r)\Bigl(\frac{hu'(r_1)}{u(r_1)}+C_1\Bigr)=\frac{u(r)}{u(r_1)}v(r_1)<0$$
for all $r\in(r_1,r(b,\alpha))$, contradicting that $v(r_2)>0$. Hence $\varphi$ has exactly one zero in $(0,r(b,\alpha)]$ and \eqref{contra2} holds in $(0,r(b,\alpha)]$.

On the other hand,
\begin{eqnarray*}
v'(r)&=&h(r)u''(r)+h'(r)u'(r)+C_1u'(r)\\
&=& h(r)\Bigl(-\frac{q'(r)}{q(r)}u'(r)-f(u(r))\Bigr)+\Bigl(h(r)\frac{q'(r)}{q(r)}-1\Bigr)u'(r)+C_1u'(r)\\
&=&-f(u(r))h(r)+(C_1-1)u'(r).
\end{eqnarray*}
If $C_1\ge 1$, then  $v'(r)<0$
for all $r\in(r_1,r(b,\alpha))$, and thus, evaluating \eqref{contra2} at $r= r(b,\alpha)$,
we find that
$$\varphi( r(b,\alpha))v'( r(b,\alpha))-\varphi'( r(b,\alpha))v( r(b,\alpha))< 0,$$
implying $\varphi'( r(b,\alpha))< 0$ and hence also $(\varphi+h\varphi')(r(b,\alpha))<0$.

Assume next that $C_1<1$. Replacing $v'$ in \eqref{contra2} we have
\begin{eqnarray*}
0&\ge& q[\varphi(-f(u)h+(C_1-1)u')-\varphi'v]\\
&=&q[(C_1-1)\varphi u'-\varphi'v]-qh\varphi f(u),
\end{eqnarray*}
hence letting $r\to r(b,\alpha)$ we obtain
$$0\ge (C_1-1)\varphi(r(b,\alpha)) u'(r(b,\alpha))-\varphi'(r(b,\alpha))v(r(b,\alpha)).$$
But
$$v(r(b,\alpha))=hu'+C_1u(r(b,\alpha))=C_1(hu'+u)(r(b,\alpha))-(C_1-1)hu'(r(b,\alpha))<-(C_1-1)hu'(r(b,\alpha)),$$
implying
$$0\ge (C_1-1)u'(r(b,\alpha))(\varphi+h\varphi')(r(b,\alpha))$$
and the result also follows in this case.

\end{proof}
\subsection{On Remark \ref{46da3} and some Examples.}\label{examples}\mbox{ }\\

We first give the proof of our assertion in Remark \ref{46da3}, namely that assumptions $(f_4)$ and $(f_6)$ imply $(f_3)$.

By L'H\^opital's rule have that
$$\frac{H_\infty}{\ell_\infty}=\lim_{r\to\infty}\frac{(Q/q)'(r)}{h'(r)}=
\lim_{r\to\infty}\frac{(Q/q)(r)}{q\int_r^\infty\frac{dt}{q(t)}}=\lim_{r\to\infty}\frac{(Q/q^2)(r)}{\int_r^\infty\frac{dt}{q(t)}}$$
$$=\lim_{r\to\infty}\frac{(Q/q^2)'(r)}{-1/q(r)}=\lim_{r\to \infty}2\frac{Q}{q^2}q'(r)-1=1-2H_\infty.$$
Hence
$$\frac{H_\infty}{\ell_\infty}=1-2H_\infty$$
and thus
$$H_\infty=\frac{\ell_\infty}{1+2\ell_\infty}.$$

Now, as by $(f_4)$, $\frac{sf'(s)}{f(s)}$ is decreasing for $s\ge\beta$, it can be easily verified that $\frac{sf(s)}{F(s)}$ is also decreasing in $(\beta,\infty)$.
From $(f_6)$
$$\frac{sf'(s)}{f(s)}\le 1+2\ell_\infty$$
we obtain that
$$\frac{(f(s))^{1/(1+2\ell_\infty)}}{s}\quad\mbox{is decreasing}$$
implying that
$$\frac{(f(s))^{1+1/(1+2\ell_\infty)}}{F(s)}\quad\mbox{is decreasing, therefore}$$
$$2\Bigl(\frac{1+\ell_\infty}{1+2\ell_\infty}\Bigr)f'(s)F(s)\le f^2(s)\quad \mbox{for $s>\beta$.}$$
Hence,
$$2\Bigl(\frac{F}{f}\Bigr)'(s)\ge \frac{1}{1+\ell_\infty}=\frac{1-2H_\infty}{1-H_\infty}>1-2H_\infty$$
and we conclude that $(f_3)$ holds.
\medskip

Finally, we develop some examples.
If $q(r)=r^{\theta}$, $\theta >1$ all the assumptions $(q_i)$, $i=1,\ldots,7$ are trivially satisfied. Indeed, $(q_1)$, $(q_2)$ and $(q_5)$ are clear and
$$H(r)\equiv\frac{1}{\theta+1}=H_\infty,\quad h'(r)\equiv\frac{1}{\theta-1}=\ell_\infty,\quad G(r)\equiv\frac{\theta-1}{2}= \overline{G},\quad \frac{Gh}{(\int_0^rh(s)ds)^{1/2}}\equiv\sqrt{\frac{\theta-1}{2}}$$
and $(q_7)$ is trivially satisfied with $C=\overline{G}$ for $\theta\le 3$.
\smallskip

A first nontrivial example is given by $q(r)=r^{\theta+1}+Cr^\theta$, $\theta\ge 1$ and $C>0$. In this case $(q_1)$ through $(q_5)$ can be directly verified and
$$H_\infty=\frac{1}{\theta+2}\quad\mbox{and}\quad \ell_\infty=\frac{1}{\theta}.$$
Hence we have uniqueness of ground states if either
$$\Bigl(\frac{F}{f}\Bigr)'(s)\ge\frac{\theta}{2(\theta+2)}\quad\mbox{or $f$ satisfies $(f_4)$},$$
and uniqueness of the second bound state if either $f$ is sublinear or $f$ satisfies $(f_4)$ and \linebreak[4]
$\displaystyle \frac{\beta f'(\beta)}{f(\beta)}\le\frac{\theta+2}{\theta}$.
\smallskip
Since
$$G(r)=-H(r)+\frac{1}{2}+\frac{q'}{qh}\int_0^r\Bigl(\frac{Q}{q}\Bigr)(t)dt,$$
assumption $(q_6)(i)$ follows from $(q_1)$-$(q_4)$ and $\overline{G}=\displaystyle \frac{\theta}{2}$.	
As we mentioned above, Theorem \ref{main2} is meaningful if $\overline{G}\le 1$, hence we only consider $1\le \theta\le 2$. For these cases, with the help of Maple software, we can verify that $(q_6)$ and $(q_7)$ are  satisfied.

For a weight having a different growth rate at $0$ and $\infty$, we can consider for example
$$q(r)=
\begin{cases} r^\theta,\quad r\in(0,r_0)\\
\displaystyle \frac{1}{e}\log(r_0)\frac{r^\mu}{\log(r)},\quad r\ge r_0
\end{cases}$$
where $\theta,\ \mu>1$ and $r_0\ge e^2$, $r_0^{\mu-\theta}=e$.
It can be seen that
$$
h(r)=\begin{cases}\displaystyle r^\theta\Bigl(\frac{r^{1-\theta}-r_0^{1-\theta}}{\theta-1}\Bigr)+C_0r^\theta,\quad r\in(0,r_0),\\
\\
\displaystyle \frac{1}{(\mu-1)^2}\frac{r}{\log(r)}((\mu-1)\log(r)+1),\quad r\ge r_0,.
\end{cases}
$$
where
$$C_0=\frac{e}{(\mu-1)^2}\frac{r_0^{1-\mu}}{\log(r_0)}((\mu-1)\log(r_0)+1).$$
If $r_0=e^2$, we have $\mu=\theta+1/2$
$$1+2\ell_\infty=\frac{2\theta+3}{2\theta-1},\quad 1-2H_\infty=\frac{2\theta-1}{2\theta+3}.$$
It can also be verified that $\overline{G}=\displaystyle\frac{2\theta-1}{4},$ and that $(q_7)$ holds with $C=\overline{G}$ for $\theta\le 5/2$.

\medskip

\section{Uniqueness of  ground states}\label{first0}

In this section we prove Theorem 1 and Theorem 2. They will both follow from the following proposition:

\begin{proposition}\label{sep5q0}
	Let $\alpha^*\in\mathcal G_1$. Under the assumptions of Theorem \ref{main02} or Theorem \ref{main01}, there exists $\delta>0$ such that if $\alpha_1,\ \alpha_2\in(\alpha^*-\delta,\alpha^*+\delta)$ with $\alpha_1<\alpha_2$ we have:
	
		\noindent	If $\alpha_1\in \mathcal G_1\cup \mathcal N_1$,
	then $\alpha_2\in \mathcal N_1$,
	\begin{equation}\label{sep55q0}Z_1(\alpha_1)>Z_1(\alpha_2)\quad\mbox{and}\quad |u'_1(Z_1(\alpha_1))|<|u'_2(Z_1(\alpha_2))|.\end{equation}
	If $\alpha_2\in\mathcal G_1$, then $\alpha_1\in{\mathcal P_1}$.
\end{proposition}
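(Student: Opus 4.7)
The plan follows the classical Peletier--Serrin / Pucci--Serrin separation scheme, adapted to the weighted setting by means of the generalized Pohozaev-type functionals announced in the introduction. First, choose $\delta>0$ so small that Propositions \ref{facts1}(2) and \ref{facts2}(3) apply on $(\alpha^{*}-\delta,\alpha^{*}+\delta)$: every solution $u(\cdot,\alpha)$ stays $C^{1}$-close to the ground state $u^{*}:=u(\cdot,\alpha^{*})$ on large compact intervals, so that the sign and intersection properties of these solutions are controlled by continuity from $u^{*}$.

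The heart of the argument is the structure of the first variation $\varphi^{*}:=\varphi(\cdot,\alpha^{*})$: I claim that there exists $r_{1}\in(0,\infty)$ with $\varphi^{*}>0$ on $[0,r_{1})$, $\varphi^{*}(r_{1})=0$, and $\varphi^{*}<0$ on $(r_{1},\infty)$. To establish this I would introduce a weight-adapted test function $v$---of the form $v=(Q/q)\,u^{*\prime}+\gamma\,u^{*}$ under the hypotheses of Theorem \ref{main02}, and $v=h\,u^{*\prime}+\gamma\,u^{*}$ under those of Theorem \ref{main01}, with $\gamma$ a constant tied to $H_{\infty}$ via $(f_{3})$ or to $\ell_{\infty}$ via $(f_{4})$, in the spirit of Proposition \ref{varphi3}. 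A direct computation yields
\[
v''+\frac{q'}{q}v'+f'(u^{*})v=\phi(r),
\]
and then the standard Wronskian identity
\[
\bigl(q(\varphi^{*\prime}v-v'\varphi^{*})\bigr)'=-q\,\phi\,\varphi^{*}
\]
allows integration on intervals bounded by zeros of $\varphi^{*}$. The structural assumptions $(q_{3})$--$(f_{3})$ (resp.\ $(q_{4})$--$(f_{4})$) are designed exactly so that $\phi$ has a definite sign pattern in each regime of $u^{*}$, which simultaneously rules out a second zero of $\varphi^{*}$ and pins down $\mathrm{sgn}\,\varphi^{*}$ at infinity.

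Once this structure of $\varphi^{*}$ is in hand, continuous dependence delivers the separation: for $\alpha>\alpha^{*}$ close to $\alpha^{*}$, $u(\cdot,\alpha)>u^{*}$ on $(0,r_{1})$ and $u(\cdot,\alpha)<u^{*}$ on $(r_{1},\infty)$, with deficit $u^{*}-u(\cdot,\alpha)$ of order $(\alpha-\alpha^{*})|\varphi^{*}|$ on large sets. Since $u^{*}\to 0$, the solution $u(\cdot,\alpha)$ must dip below zero at some finite radius, placing $\alpha\in\mathcal N_{1}$; symmetrically $\alpha<\alpha^{*}$ yields $\mathcal P_{1}$. The inequality $Z_{1}(\alpha_{1})>Z_{1}(\alpha_{2})$ for $\alpha_{1}<\alpha_{2}$ in $\mathcal N_{1}\cup\mathcal G_{1}$ then follows because the sign structure of $\varphi^{*}$ forces $u_{1}$ and $u_{2}$ to cross exactly once, with $u_{1}>u_{2}$ past the crossing. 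Finally, $|u_{1}'(Z_{1}(\alpha_{1}))|<|u_{2}'(Z_{1}(\alpha_{2}))|$ comes from the energy functional $I(r)=|u'|^{2}+2F(u)$: $I$ is strictly decreasing, $I(0,\alpha_{i})=2F(\alpha_{i})$ are strictly ordered since $F$ is strictly increasing on $(\beta,\infty)$, and this ordering, combined with the Wronskian information and the fact that $F=0$ at the first zeros, propagates to the desired slope comparison.

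The principal technical obstacle I anticipate is securing the global sign of $\phi$ on $(0,\infty)$. In the unweighted case $q(r)=r^{n-1}$, the quantities $H$ and $h'$ are constants, so the algebra collapses; here the monotonicity conditions in $(q_{3})(ii)$ (on $q(H-H_{\infty})$) and in $(q_{4})$ (on $h'$) are precisely what compensate, and threading the limit values $H_{\infty}$ and $\ell_{\infty}$ through the choice of $\gamma$ via $(f_{3})$ and $(f_{4})$ so as to keep $\phi$ sign-definite uniformly as $r\to\infty$ is the delicate point of the construction.
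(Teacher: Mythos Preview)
Your outline diverges from the paper's proof in two essential ways, and one of them is a genuine gap.

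\textbf{On the structure of $\varphi^{*}$.} You aim to show that $\varphi^{*}$ has \emph{exactly one} zero on all of $(0,\infty)$. The paper does not prove this. Under the hypotheses of Theorem~\ref{main01}, Proposition~\ref{varphi3} (which is the Wronskian argument you sketch with $v=hu'+\gamma u$) only controls $\varphi$ on $(0,r(b,\alpha)]$; past $r(b)$ one has $f(u)\le 0$, the sign pattern of $\phi$ is lost, and no second-zero exclusion is available. Under the hypotheses of Theorem~\ref{main02} the paper does not analyze zeros of $\varphi$ at all beyond the existence of a first one. In both cases the paper abandons $\varphi$ once a first intersection of $u_{1},u_{2}$ is located and switches to a direct two-solution comparison in the variable $s=u$: under Theorem~\ref{main02} it tracks the Erbe--Tang functional $P(s,\alpha)=-Q(|u'|^{2}+2F)-2qu'F/f$ together with $S_{12}=q(r_{1})|u_{1}'|/q(r_{2})|u_{2}'|$ and $W_{j}=(Q/q)(r_{j})\sqrt{|u_{j}'|^{2}+2F}$, with the key monotonicity $\tfrac{d}{ds}(P_{1}-DP_{2})<0$ hinging precisely on $(q_{3})(ii)$; under Theorem~\ref{main01} it uses $V_{j}=h(r_{j})\sqrt{|u_{j}'|^{2}+2F}$ and the decrease of $h'$ from $(q_{4})$.

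\textbf{The slope comparison.} Your argument for $|u_{1}'(Z_{1}(\alpha_{1}))|<|u_{2}'(Z_{1}(\alpha_{2}))|$ via the energy $I$ does not go through. You have $I_{1}(0)<I_{2}(0)$ and each $I_{j}$ is decreasing, but you want $I_{1}(Z_{1}(\alpha_{1}))<I_{2}(Z_{1}(\alpha_{2}))$ at \emph{different} radii, with $Z_{1}(\alpha_{1})>Z_{1}(\alpha_{2})$; monotonicity of $I$ says nothing here, since $I_{1}$ has decayed over a longer interval from a smaller initial value. This is exactly why the paper introduces the weighted radial functionals $W_{j}$ (resp.\ $V_{j}$): the prefactor $Q/q$ (resp.\ $h$) is increasing and is calibrated so that, in the range $F(s)<0$, the difference $W_{1}-W_{2}$ (resp.\ $V_{1}-V_{2}$) is monotone in $s$ as long as $r_{1}>r_{2}$ and $|u_{1}'|<|u_{2}'|$. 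That monotonicity is what carries the ordering all the way down to $s=0$ and simultaneously yields $Z_{1}(\alpha_{1})>Z_{1}(\alpha_{2})$ and the slope inequality. Without such a functional your proposal has no mechanism to compare $|u_{j}'|$ at the distinct first zeros.
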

\begin{proof}[\bf Proof of Theorems 1 and 2 assuming the validity of Proposition \ref{sep5q0}.]

 Let $\alpha^*\in\mathcal G_1$ and consider the set
$$A=\{\alpha>\alpha^*\ :\ (\alpha^*,\alpha)\subset\mathcal N_1\}.$$
By Proposition \ref{sep5q0}, $A$ is not empty. Let $\bar\alpha=\sup A$ and assume $\bar\alpha<\infty$.  Since $\mathcal P_1$ and $\mathcal N_1$ are open, $\bar\alpha\not\in\mathcal N_1\cup\mathcal P_1$, hence $\bar\alpha\in\mathcal G_1$. But again from Proposition \ref{sep5q0} there exists $\delta>0$ such that $(\bar\alpha-\delta,\bar\alpha)\subset\mathcal P_1$ implying that $\bar\alpha$ is not the supremum of $A$. Hence we conclude that $\bar\alpha=\infty$ and thus $\mathcal N_1\supset (\alpha^*,\infty)$. Since this is true for any $\alpha^*\in\mathcal G_1$, we conclude that $\mathcal G_1=\{\alpha^*\}$.
\end{proof}
\subsection{  Proof of Proposition \ref{sep5q0} under the assumptions of Theorem \ref{main02}} \mbox{ }\\

\noindent Let $\alpha^*\in\mathcal G_1$. From Proposition \ref{facts2}(1), $\varphi(\cdot,\alpha^*)$ has a first zero $r^*\in(0,Z_1(\alpha^*))$. Hence, there exists $\delta>0$ such that for $\alpha_1<\alpha_2$ with $\alpha_1,\ \alpha_2\in(\alpha^*-\delta,\alpha^*+\delta)$, the solutions $u_1(r)=u(r,\alpha_1)$ and $u_2(r)=u(r,\alpha_2)$ intersect at a first point $r_I\in(0,Z_1(\alpha^*))$ and either $U_I=u_i(r_I)$ belongs to the interval $[\beta,\alpha_1)$, or to the interval $(0,\beta)$. We will analyze both cases.
 To this end and for $|s|\ge\beta$ we will use  the well known functional introduced by Erbe and Tang in \cite{et}
\begin{eqnarray}\label{defP}P(s,\alpha):=-Q(r(s,\alpha))\Bigl(|u'( r(s,\alpha))|^2+2F(s)\Bigr)-2q(r(s,\alpha))u'( r(s,\alpha))\frac{F}{f}(s),
\end{eqnarray}
with derivative
\begin{equation}\label{pderivn}\frac{\partial P}{\partial s}(s,\alpha)=q(r(s,\alpha))u'(r(s,\alpha))\Bigl(1-2\Bigl(\frac{Q}{q}\Bigr)'(r(s,\alpha))-2\Bigl(\frac{F}{f}\Bigr)'(s)\Bigr)
\end{equation}
and
$$S_{12}(s)=\frac{q(r_1(s))|u_1'(r_1(s))|}{q(r_2(s))|u_2'(r_2(s))|}.$$
with derivative
\begin{equation}\label{sderivn}
\frac{d}{ds}S_{12}(s)=S_{12}(s)f(s)\Bigl(\frac{1}{(u_2'( r_2(s)))^2}-\frac{1}{(u_1'( r_1(s)))^2}\Bigr).
\end{equation}
In order to simplify the notation we set
$$
P_1(s)=P(s,\alpha_1),\quad P_2(s)=P(s,\alpha_2)$$
and
$$r_j(s)=r(s,\alpha_j),\quad u_j(r_j(s))=u(r_j(s),\alpha_j),\quad j=1,2.$$
By $(f_3)$ and because $u$ is decreasing in $(0,Z_1(\alpha))$, it holds that
$\frac{\partial P}{\partial s}(s,\alpha)\ge 0$  for all $|s|\ge \beta$, Hence, as $P_1(\alpha_1)=0$, we have that $(P_1-P_2)(\alpha_1)>0$, $r_1(s)<r_2(s)$ for all $s\in(U_I,\alpha_1)$ and $|u_1'( r_1( U_I))|<|u_2'( r_2( U_I))|$.

Assume that $U_I>\beta$.
We will first prove
\begin{eqnarray}\label{carmencitan} q( r_1)|u_1'(r_1(s))|<q( r_2)|u_2'(r_2(s))|\quad\mbox{
	and }\quad  P_1(s)> P_2(s)\quad\mbox{ for all }s\in [ U_I,\alpha_1].\end{eqnarray}
Observe first that $ S_{12}(\alpha_{1})=0$ and $ S_{12}( U_I)<1$. If there exists a point $t\in( U_I,\alpha_1)$ such that
$  S'_{12}(t)=0,$ then $ r_1'(t)= r_2'(t)$ and hence, from the definition of $ U_I$, $$  S_{12}(t)= \frac{q( r_1(t))}{q( r_2(t))}<1,$$ implying $ S_{12}(s)<1$ for $s\in [ U_I,\alpha_1]$.

Next, using  $(f_3)$ and the stronger assumption $(ii)$ on $q(H-H_\infty)$ in $(q_3)$, we obtain
\begin{eqnarray*}
\frac{d}{d s}	\bigl( P_1- P_2\bigr)(s)&=&
	 q( r_2)|u_2'(r_2)|( 1-S_{12})\Bigl(1-2H_\infty-2\Bigl(\frac{F}{f}\Bigr)'\Bigr)\\
	 &&-2\Bigl(q( r_2)|u_2'( r_2)|(H( r_2)-H_\infty)-q( r_1)|u_1'( r_1)|(H( r_1)-H_\infty)\Bigr)
	<0.
\end{eqnarray*}
Hence, for all $s\in  (U_I,\alpha_1),$ $ P_1(s)- P_2(s)> P_1(\alpha_{1})- P_2(\alpha_{1})>0$ and \eqref{carmencitan} follows.

Set now $U:=\min\{U_I,\beta\}$. We will prove next that
\begin{eqnarray}\label{carmencita0n}r_1(U)\ge r_2(U)\quad\mbox{and}\quad \frac{Q}{q}(r_1(U))|u_1'(r_1(U))|<\frac{Q}{q}(r_2(U))|u_2'(r_2(U))|.
\end{eqnarray}

This is clearly true if $U=U_I$, so we only need to prove it in the case that $U_I>\beta$.
We have
$$\displaystyle \frac{Q( r_1)}{q( r_1)}|u_1'( r_1)|( U_{I})<\frac{Q( r_2)}{q( r_2)}|u_2'( r_2)|( U_{I}),$$
hence
\begin{equation} \label{paran} r_1(s)> r_2(s) , \quad \mbox{and}  \quad \frac{Q( r_1)}{q( r_1)}|u_1'( r_1)|(s)<\frac{Q( r_2)}{q( r_2)}|u_2'( r_2)|(s)
\end{equation}
for $s$ in some left neighborhood of $ U_I$. Let
 $s_0\in( \beta, U_{I})$ be the first point where \eqref{paran} fails. Then
$$\frac{Q( r_1)}{q( r_1)}|u_1'( r_1)|(s_0)=\frac{Q( r_2)}{q( r_2)}|u_2'( r_2)|(s_0)\quad \mbox{and }  r_1(s) > r_2(s),\quad \mbox{for all $s\in ( s_0, U_I]$}, $$
Set
$$D:= S_{12}(s_0)=\frac{q( r_1)|u_1'( r_1)|}{q( r_2)|u_2'( r_2)|}(s_0)=\frac{Q( r_2)/q^2( r_2)}{Q( r_1)/q^2( r_1)}(s_0).$$
Then, using  that from $(q_3)$ $Q/q^2$ is nonincreasing, we have that $D\ge 1$.
From the definition of $ P_1$ and $ P_2$, we have that
\begin{eqnarray*}
( P_1-D P_2)(s_0)&=& 2F(s_0)(DQ( r_2)-Q( r_1))(s_0)\\
&=&2F(s_0)Q( r_1)(|u_1'( r_1)|^2/|u_2'( r_2)|^2-1)(s_0)<0.
\end{eqnarray*}
On the other hand, from  \eqref{carmencitan},   we have that $( P_1- P_2)( U_{I})>0$.   Since $P_2(\alpha_{2})<0$ and  $ P_2$ increases in $(\beta,\alpha_2)$, we have that $ P_2( U_I)<0$. Hence, as $D\ge 1$,  we conclude that
$$( P_1-D P_2)( U_{I})>0.$$
From  \eqref{sderivn} we see  that $ S_{12}$ is decreasing in $(s_0,U_I)$ and thus $ S_{12}(s)< D.$ Finally, using $(f_3)$ we deduce
$$
\frac{d}{d s}\bigl( P_1-D P_2\bigr)(s)=-q( r_2)|u_2'( r_2)|
(S_{12}-D)\Bigl(1-2H_\infty-2\Bigl(\frac{F}{f}\Bigr)'(s)\Bigr)$$
$$-\,
2Dq( r_2)|u_2'( r_2)|(H( r_2)-H( r_1))
+2\Bigl(q( r_1)|u_1'( r_1)|-Dq( r_2)|u_2'( r_2)|\Bigr)(H( r_1)-H_\infty)<0
$$
for all $s\in(s_0,U_I)$ and thus
$$
( P_1-D P_2)(s_0)>0,$$ a contradiction. Hence, \eqref{paran} holds in $[\beta,U_I]$ and  \eqref{carmencita0n} follows.

\bigskip

 From the previous computations we need to examine the behavior of the solutions for  $s\in[\max\{ S_1, S_2\},U]$, where
\begin{eqnarray}\label{defS} S_j:= \inf\{s\in ( u_j(Z_1(\alpha_j)),\alpha_1)\ \mbox{such that} \ (u_j'( r_j(s)))^2+2F(s)>0\},\quad j=1,2.
 \end{eqnarray}

Then
$$ W_j(s)= \frac{Q}{q}( r_j(s))\sqrt{(u_j'( r_j(s))^2+2F(s)}, \quad s\in( S_j,\alpha_1],$$
is well defined. From \eqref{carmencita0n}  $|u_1'( r_1(U))|<|u_2'( r_2(U))|$ and hence $ r_1> r_2$ in some left neighborhood of $U$. We claim that
$$ W_1(s)\le  W_2(s),\,  r_1(s)> r_2(s)\,\mbox{and}\, |u_1'( r_1(s))|<|u_2'( r_2(s))|\quad\mbox{for all }s\in
(\max\{ S_1, S_2\},U].$$
Indeed, as
$$\frac{d}{ds}  W_j(s)=-\Bigl(\frac{Q}{q}\Bigr)'(r_j(s))\frac{\sqrt{(u_j'( r_j(s))^2+2F(s)}}{|u_j'(r_j(s))|}
	+\frac{Qq'}{q^2}(r_j(s))\frac{|u_j'( r_j(s))|}{\sqrt{(u_j'( r_j(s))^2+2F(s)}},
	$$
we have that
$$\frac{d}{d s}( W_1- W_2)(s)=\Bigl(\frac{Q}{q}\Bigr)'(r_2(s))\frac{\sqrt{(u_2'( r_2(s))^2+2F(s)}}{|u_2'(r_2(s))|}-\Bigl(\frac{Q}{q}\Bigr)'(r_1(s))\frac{\sqrt{(u_1'( r_1(s))^2+2F(s)}}{|u_1'(r_1(s))|}$$
$$+\frac{Qq'}{q^2}(r_1(s))\frac{|u_1'( r_1(s))|}{\sqrt{(u_1'( r_1(s))^2+2F(s)}}-\frac{Qq'}{q^2}(r_2(s))\frac{|u_2'( r_2(s))|}{\sqrt{(u_2'( r_2(s))^2+2F(s)}}.$$
Hence, as from  $(q_3)$  $\frac{Qq'}{q^2}$ is nondecreasing,    $x/\sqrt{x^2+2F(s)}$ decreases when $F(s)<0$, we have that
$\frac{d}{d s}( W_1- W_2)(s)>0$
as long as $ r_1(s)> r_2(s)$ and $|u_1'( r_1(s))|<|u_2'( r_2(s))|$, that is, until we reach $S_1$. This proves our claim and shows that $\max\{ S_1, S_2\}= S_1$ and thus $ S_1\ge S_2$.\\

Now, if $\alpha_1\in\mathcal G_1\cup\mathcal N_1$, then $S_1=0$ hence also $S_2=0$ and $\alpha_2\in\mathcal G_1\cup\mathcal N_1$. Since
$Z_1(\alpha_1)=r_1(0)>r_2(0)=Z_1(\alpha_2)$ and $|u_1'(Z_1(\alpha_1))|<|u_2'(Z_1(\alpha_2))|$ we deduce that $\alpha_2\in\mathcal N_1$.

Assume next that $\alpha_2\in\mathcal G_1$, so that $S_2=0$. Since then $|u_2'(Z_1(\alpha_2))|=0$, we conclude that $S_1>0$ and thus $\alpha_1\in\mathcal P_1$.

\bigskip
\subsection{  Proof of Proposition \ref{sep5q0} under the assumptions of Theorem \ref{main01}}
\mbox{ }\\

\noindent Let $\alpha^*\in\mathcal G_1$ and let $r^*$ be the first zero  of $\varphi(\cdot,\alpha^*)$. If this zero occurs in $(0,r(\beta,\alpha^*)]$, then from Proposition \ref{varphi3} and by continuity there exists $\delta>0$ such that  any two  solutions $u_i(r)=u(r,\alpha_i)$, $i=1,2$, with initial values $\alpha_1< \alpha_2$, and $\alpha_1,\ \alpha_2\in(\alpha^*-\delta,\alpha^*+\delta)$, intersect exactly once in $(0,r(b,\alpha^*)]$ and we denote this intersection point by $r_I$. If  $r^*\in (r(\beta,\alpha^*),\infty)$, then again by continuity we can choose $\delta>0$ so that $u_1$ and $u_2$ intersect for the first time at some $r_I$ such  that $u_i(r_I)<\beta$. We set $U_I=u_1(r_I)=u_2(r_I)$ and
$U:=b$ if $U_I\ge\beta$ and $U:=U_I$ in $U_I<\beta$.

Set now
$$V(s,\alpha)=h(r(s,\alpha))\sqrt{|u'(r(s,\alpha))|^2+2F(s)},$$
where we recall that $r_i(s)=r(s,\alpha_i)$ denotes the inverse  of $u_i$ while $u_i$ decreases. Assume first that it is the case that $U=b$. It can be easily verified that
$$\frac{\partial V}{\partial\alpha}(s,\alpha)=\frac{(\varphi+h\varphi')u'+hf(s)\varphi-2h'\varphi F(s)/u'}{\sqrt{|u'(r(s,\alpha))|^2+2F(s)}}$$
and thus, from Proposition \ref{varphi3} we have that
$$\frac{\partial V}{\partial\alpha}(s,\alpha)\Bigm|_{s=b}>0.$$
Also, as
$$\frac{\partial r}{\partial\alpha}(s,\alpha)\Bigm|_{s=b}=-\frac{\varphi(r(s,\alpha),\alpha)}{u'(r(s,\alpha),\alpha)}\Bigm|_{s=b}<0,
$$
we see that in this case
$$r_1(b)>r_2(b)\quad\mbox{and}\quad V(b,\alpha_1)<V(b,\alpha_2),$$
from where it also holds that $|u_1'(r_1(b))|<|u_2'(r_2(b))|$.

Assume now that it is the case that $U=U_I$. Then $r_1(U_I)=r_2(U_I)$ and $|u_1'(r_I)|<|u_2'(r_I)|$, hence in any of the two cases we have
$$r_1(U)\ge r_2(U),\quad |u_1'(r_1(U))|<|u_2'(r_2(U))|,\quad\mbox{and}\quad V(U,\alpha_1)<V(U,\alpha_2).$$
We claim that in fact
\begin{equation}
\label{cl1}
r_1(s)\ge r_2(s),\, V(s,\alpha_1)<V(s,\alpha_2)\,\mbox{and}\, |u_1'(s)|<|u_2'(s)|\quad\mbox{for all }s\in [\max\{S_1,S_2\},U).
\end{equation}
where $S_i$ is defined in \eqref{defS}.

Indeed, by continuity  there exists $c\ge\max\{S_1,S_2\}$ such that
$$r_1(s)> r_2(s),\quad V(s,\alpha_1)\le V(s,\alpha_2)\quad\mbox{and}\quad |u_1'(r_1(s))|<|u_2'(r_2(s))|\quad\mbox{in}\quad[c,U).$$
Setting $V_i(s)=V(s,\alpha_i)$ we find that
\begin{eqnarray*}
\frac{d}{ds}(V_1-V_2)(s)=\frac{|u_1'(r_1(s))|}{\sqrt{|u_1'(r_1(s))|^2+2F(s)}}-\frac{|u_2'(r_2(s))|}{\sqrt{|u_2'(r_2(s))|^2+2F(s)}}\\
+2|F(s)|\Bigl(\frac{h'(r_1(s))}{|u_1'(r_1(s))|\sqrt{|u_1'(r_1(s))|^2+2F(s)}}-\frac{h'(r_2(s))}{|u_2'(r_2(s))|\sqrt{|u_1'(r_1(s))|^2+2F(s)}} \Bigr),
\end{eqnarray*}
hence using that $F(s)<0$ in $(0,\beta)$ and that by $(q_4)$ $h'$ is decreasing in $(0,\infty)$, we obtain that
$$\frac{d}{ds}(V_1-V_2)(s)>0\quad\mbox{for all}\quad s\in[c,U)$$
and thus our claim in \eqref{cl1}  follows. Furthermore, it also follows that
 $S_1\ge S_2$.

 Proposition \ref{sep5q0} follows as in the previous case.

\bigskip

\bigskip
\section{Uniqueness of higher order bound states}

As in the previous section, our results will follow from the following proposition:
\begin{proposition}\label{sep5q}
	Let $\alpha^*\in\mathcal G_k$. Under the assumptions of Theorem \ref{main1} for $k=2$ or Theorem \ref{main2} for any $k\in\mathbb N$, there exists $\delta>0$ such that if $\alpha_1,\ \alpha_2\in(\alpha^*-\delta,\alpha^*+\delta)$ with $\alpha_1<\alpha_2$ we have:
	
	\noindent	If $\alpha_1\in \mathcal G_k\cup \mathcal N_k$,
	then $\alpha_2\in \mathcal N_k$,
	\begin{equation}\label{sep55q}Z_k(\alpha_1)>Z_k(\alpha_2)\quad\mbox{and}\quad |u'_1(Z_k(\alpha_1))|<|u'_2(Z_k(\alpha_2))|.\end{equation}
	If $\alpha_2\in\mathcal G_k$, then $\alpha_1\in{\mathcal P_k}$.
\end{proposition}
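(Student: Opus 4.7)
The plan is to extend the argument of Proposition \ref{sep5q0} inductively across the $k$ monotone branches of the solutions $u_1(\cdot,\alpha_1)$ and $u_2(\cdot,\alpha_2)$ with $\alpha_1<\alpha_2$ close to $\alpha^*\in\mathcal{G}_k$. The first step is to invoke Proposition \ref{facts2}(3) to fix $\delta_1>0$ so that both solutions have exactly $k$ extremal points in a common interval $[0,T_{k-1}(\alpha^*)+\eta]$, with every extremal value satisfying $|M|>\beta$, so that consecutive extremal points of $u_1$ and $u_2$ bracket an intersection point, and in particular there is an intersection in $(T_{k-1}(\alpha^*),Z_k(\alpha^*))$. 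This supplies the base data needed to start a branch-by-branch comparison.

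The core of the argument runs on each maximal interval of monotonicity, where one parameterises by $s=u_i(r_i(s))$ (or $s=u_i(\bar r_i(s))$ on an increasing branch). On such an interval I would use the Erbe--Tang functional $P(s,\alpha)$ of \eqref{defP} together with one of the auxiliary functionals $(Q/q)\sqrt{|u'|^2+2F(u)}$, $h\sqrt{|u'|^2+2F(u)}$, or $\bigl(\int_0^r h\bigr)^{1/2}\sqrt{|u'|^2+2F(u)}$ (dictated by whether we are under Theorem \ref{main1} or Theorem \ref{main2}) to propagate, in the same way as in Proposition \ref{sep5q0}, the comparisons $r_1(s)\ge r_2(s)$, $|u_1'(r_1(s))|<|u_2'(r_2(s))|$, and the ordering of the functional values, from the intersection point on that branch down to the next extremum (or up to the exit of the branch). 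The essential novelty relative to Proposition \ref{sep5q0} is that one now repeats this hump by hump.

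The linking step --- crossing an extremal point from one hump to the next --- is where the main work lies. At an extremum $r=T_{j-1}(\alpha_i)$ we have $u_i'=0$, so $I_i(T_{j-1}(\alpha_i))=2F(M_i^{(j)})$ with $|M_i^{(j)}|>\beta$, and the comparison of the functional values translates into a comparison of both extremal values and extremal radii; Proposition \ref{facts2}(3)(i) then supplies a new intersection point on the next hump, which re-synchronises the two parametrisations and lets the inductive step restart. Carrying this through hump $k$ yields the monotonicity and derivative comparison \eqref{sep55q} on the final monotone branch, and the trichotomy between $\mathcal{G}_k$, $\mathcal{N}_k$, and $\mathcal{P}_k$ is then read off exactly as at the end of the proof of Proposition \ref{sep5q0}, using also Lemma \ref{g0} and Proposition \ref{varphi3} to guarantee an intersection at most once per hump under the sub-Serrin hypotheses.

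The main obstacle is precisely this hump-to-hump transfer: the weight appearing in the Erbe--Tang type functional must remain monotone along both the decreasing and increasing branches of $u_i$, and one must control how the ratios of such weights evaluated at the two extremal radii behave across a junction. For $k=2$ under Theorem \ref{main1}, this is handled by combining $(q_4)$ with the extra condition $(q_5)$ that $q'q$ is increasing. For arbitrary $k$ in Theorem \ref{main2}, assumption $(q_6)$ provides the non-negativity and monotonicity of $G$ that make $\bigl(\int_0^rh\bigr)^{1/2}\sqrt{|u'|^2+2F(u)}$ a uniform functional across all humps, while $(q_7)$ yields exactly the comparison with the $P$-weight needed at the junctions, and $(f_7)$ balances these against the growth of $F/f$ above $\beta$; once this package is in place, the inductive propagation and the final reading-off of the trichotomy go through just as in the ground-state case.
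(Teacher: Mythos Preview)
Your high-level plan --- set up the hump-by-hump structure via Proposition \ref{facts2}(3), use an Erbe--Tang type functional to carry information across extrema, and pair it with a $W$-type functional on each monotone branch --- matches the paper's architecture. But there is a genuine gap in the choice of functionals for the Theorem \ref{main2} case.

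Under Theorem \ref{main2} you propose to use $P$ from \eqref{defP} as the carrier across humps. This cannot work: the monotonicity of $P$ and of the differences $P_1-P_2$, $P_1-DP_2$ in the argument of Proposition \ref{sep5q0} relies on $(q_3)$ and $(f_3)$, neither of which is assumed in Theorem \ref{main2} (only $(q_1)$--$(q_2)$, $(q_4)$, $(q_6)$, $(q_7)$, $(f_7)$ are). The paper therefore introduces a different Erbe--Tang--type functional built from $h$ and $\int_0^r h$ rather than from $Q$ and $q$,
\[
\mathcal T(s,\alpha)=-\frac{F}{f}(s)\,h(r)\,u'-\tfrac12\Bigl(\int_0^{r}h\Bigr)\bigl(|u'|^2+2F(s)\bigr)+T_-(s),\qquad \partial_s\mathcal T=h\,u'\bigl(G(r)-(F/f)'(s)\bigr),
\]
whose sign is controlled precisely by $(q_6)$ and $(f_7)$. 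The conditions $(q_6)(ii)$ and $(q_7)$ are tailored to make $\mathcal T_1-\mathcal T_2$ monotone in the separate regimes $s>\beta$ (before the intersection), $|s|<\beta$, and $s<-\beta$, paired with $\widehat W=(\int_0^r h)^{1/2}\sqrt{I}$ on the intermediate band. Without $\mathcal T$ the hump-to-hump transfer does not go through under the stated hypotheses; this is the missing idea in your sketch.

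For Theorem \ref{main1} ($k=2$) there is a smaller but analogous omission: on the first decreasing branch, in the band $|s|<\beta$, the paper uses $\widetilde W_j=q(r_j)\sqrt{I}$, which is not among the three functionals you list, and $(q_5)$ enters exactly to make $\widetilde W_1-\widetilde W_2$ monotone there (Lemma \ref{sep2mm}). Finally, Lemma \ref{g0} and Proposition \ref{varphi3} require $(f_4)$ and are invoked only in the Theorem \ref{main1} argument, to obtain the sign of $\varphi+h\varphi'$ (or of $\varphi'$) at $r(b,\alpha)$; they are neither available nor needed under Theorem \ref{main2}, so they cannot be used there to limit intersections per hump as you suggest.
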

We prove Theorems \ref{main1} and \ref{main2} in subsection 4.3.

\subsection{  Proof of Proposition \ref{sep5q} under the assumptions of Theorem \ref{main1} ($k=2$)}
\mbox{ }\\

\noindent

We will analyze the behavior of  two  solutions that start with initial values $\alpha_1< \alpha_2$, with $\alpha_1,\ \alpha_2$ in some small neighborhood of $\alpha^*$
and study their behavior in two steps. From Proposition \ref{facts2}(3) the two solutions intersect at a first point $r_I>0$, and we set $U_I=u_1(r_I)=u_2(r_I)$, and  they also intersect in their way up at a first point $\bar r_I$ and we set  $\bar U_I=u_1(\bar r_I)=u_2(\bar r_I)$. Then denoting $m_1=u_1(T_1(\alpha_1))$, $m_{2}=u_2(T_1(\alpha_2))$, we have that $U_I\in[\beta,\alpha_1]\cup[-\beta,\beta]\cup[\max\{m_1,m_2\},-\beta]$ and   $\bar U_I\in[\max\{m_1,m_2\},-\beta]\cup[-\beta,0]$, so we analyze the behavior of the solutions
\medskip

\begin{enumerate}
	\item[Step 1:] in their way down from $U_I$ to $\max\{m_1,m_2\}$, where $m_1>m_2$,
	\item[Step 2:] as they turn and in their way up to $0$.
\end{enumerate}

\bigskip

This analysis will be done with the help of three functionals:
 for $j=1,2$ we  will consider the functional $\widetilde W_j$ defined by,
$$\widetilde W_j(s):= \widetilde W(s,\alpha_j)= q(r_j(s))\sqrt{(u_j'( r_j(s)))^2+2F(s)}, \quad s\in[m_j,\alpha_j]$$
with
\begin{eqnarray}\label{tWder}
	\frac{d \widetilde W_j}{d s}(s)=\frac{2q(r_j(s))q'(r_j(s))F(s)}{u_j'(r_j(s))\widetilde W_j(s)}.
\end{eqnarray}
and the functional
$$\bar W_j(s):=\bar W(s,\alpha_j)=\frac{Q(\bar r_j(s))}{q(\bar r_j(s))}\sqrt{|u_j'(\bar r_j(s))|^2+2F(s)},$$
where we recall that $Q(r):=\int_0^rq(t)dt$, with
$$\frac{d  \bar W_j}{d s}(s)=
\Bigl(\frac{Q}{q}\Bigr)'(\bar r_j(s))\frac{\sqrt{(u_j'(\bar r_j(s))^2+2F(s)}}{u'(\bar r_j(s))}
-\frac{Qq'}{q^2}(\bar r_j(s))\frac{u'(\bar r_j(s))}{\sqrt{(u_j'(\bar r_j(s))^2+2F(s)}},
$$
 where
$$\bar r_j(s)=\bar r(s,\alpha_j),\quad u_j(\bar r_j(s))= u(\bar r_j(s),\alpha_j),\quad j=1,2.$$
\medskip

 We will also use  the  functional $P$ defined in \eqref{defP}
as well as its version for the way up, that is
$$\bar P(s,\alpha):=-Q(\bar r(s,\alpha))\Bigl(|u'(\bar r(s,\alpha))|^2+2F(s)\Bigr)-2q(\bar r(s,\alpha))u'(\bar r(s,\alpha))\frac{F}{f}(s),$$
with derivative
\begin{equation}\label{bpderiv}\frac{\partial \bar P}{\partial s}(s,\alpha)=q(\bar r(s,\alpha))u'(\bar r(s,\alpha))\Bigl(1-2\Bigl(\frac{Q}{q}\Bigr)'(\bar r(s,\alpha))-2\Bigl(\frac{F}{f}\Bigr)'(s)\Bigr).
\end{equation}
By $(f_3)$ (recall from Remark \ref{46da3} that $(f_4)$ and $(f_6)$ imply $(f_3)$) and since in the way up $u'(r,\alpha)>0$,
 $\frac{\partial \bar P}{\partial s}(s,\alpha)\le 0$ for all $|s|\ge \beta$.
\medskip
Again in order to simplify the notation we set
$$
P_1(s)=P(s,\alpha_1),\quad P_2(s)=P(s,\alpha_2),\qquad
\bar P_1(s)=\bar P(s,\alpha_1),\quad \bar P_2(s)=\bar P(s,\alpha_2).$$

\medskip

\noindent \underline{Step 1}: The goal here is to prove that  there exists a neighborhood of $\alpha^*$ such that if $U_I\in[-\beta,\alpha_1)$, then
\begin{equation}\label{goal1}
r_1(-\beta)>r_2(-\beta)\quad\mbox{ and }\quad q(r_1(-\beta))|u_1'(r_1(-\beta))|<q(r_2(-\beta))|u_2'(r_2(-\beta))|,
\end{equation}
and independently of the location of $U_I$,
\begin{equation}\label{goal2}
m_1>m_2\quad\mbox{and}\quad P_1(m_1)>P_2(m_2).
\end{equation}

\begin{lemma}\label{sep2mm}
Under the assumptions of Theorem \ref{main1},  there exists $\delta_2\in(0,\delta_1]$ such that for all $\alpha_1,\ \alpha_2\in(\alpha^*-\delta_2,\alpha^*+\delta_2)$ with $\alpha_1<\alpha_2$ it holds that
	$$r_1( s)> r_2( s)\quad\mbox{and}\quad \widetilde W_1( s)<\widetilde W_2( s),\quad\mbox{for all }s\in[-\beta,U_{bI})$$
	where $U_{bI}=b$ if $U_I\ge\beta$, and $U_{bI}=U_I$   if $-\beta\le U_I<\beta$.
\end{lemma}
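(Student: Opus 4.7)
The plan is to prove Lemma \ref{sep2mm} via a continuation (bootstrap) argument that starts at $s=U_{bI}$, where the initial inequalities can be verified, and extends them as $s$ decreases to $-\beta$. Hypothesis $(q_5)$ will enter crucially in the calculus step that propagates $\widetilde{W}_1<\widetilde{W}_2$.

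First I would establish the initial data at $s=U_{bI}^-$. When $-\beta\le U_I<\beta$ we have $U_{bI}=U_I$ and $r_1(U_I)=r_2(U_I)=r_I$ by definition of the intersection, so the inequality $\widetilde{W}_1(U_I)<\widetilde{W}_2(U_I)$ reduces to $|u_1'(r_I)|<|u_2'(r_I)|$. Under hypothesis (ii) of Theorem \ref{main1}, this strict inequality follows from Proposition \ref{varphi3} applied at the first zero of $\varphi(\cdot,\alpha^*)$; under hypothesis (i), Proposition \ref{facts2}(2) forces $U_I\le b$ and a parallel $\varphi$-analysis yields the same conclusion. When $U_I\ge\beta$, so $U_{bI}=b$, I would first use the Erbe--Tang $P$-functional exactly as in the proof of Theorem \ref{main02} (which applies via Remark \ref{46da3}) to obtain the desired inequalities at $s=\beta$, and then continue them through $[b,\beta]$ by a short comparison.

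Next, for the propagation, suppose both $r_1(s)>r_2(s)$ and $\widetilde{W}_1(s)<\widetilde{W}_2(s)$ hold on some subinterval. The functional identity $|u_j'(r_j(s))|^2=\widetilde{W}_j(s)^2/q^2(r_j(s))-2F(s)$, combined with the monotonicity of $q$, forces $|u_1'(r_1(s))|<|u_2'(r_2(s))|$. Using \eqref{tWder} together with $(q_5)$ (which gives $(qq')(r_1)>(qq')(r_2)$), a direct computation produces
\begin{equation*}
\frac{d}{ds}(\widetilde{W}_1-\widetilde{W}_2)(s)=2F(s)\Bigl[\frac{(qq')(r_1)}{u_1'\widetilde{W}_1}-\frac{(qq')(r_2)}{u_2'\widetilde{W}_2}\Bigr]\ge 0,
\end{equation*}
with strict inequality whenever $F(s)\neq 0$, since both terms in the bracket are negative and the first has larger absolute value. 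So $\widetilde{W}_1-\widetilde{W}_2$ is nondecreasing in $s$, and therefore strictly decreases as $s$ decreases from $U_{bI}$ off the finite set $\{-\beta,0,\beta\}$.

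Finally, I would close the bootstrap by letting $s^*$ be the infimum of those $s\in[-\beta,U_{bI}]$ for which both strict inequalities hold on $(s,U_{bI}]$, and arguing by contradiction that $s^*=-\beta$. If $r_1(s^*)=r_2(s^*)$, this would create a second intersection of $u_1,u_2$ in the first decreasing phase, contradicting Proposition \ref{facts2}(3)(i). If instead $\widetilde{W}_1(s^*)=\widetilde{W}_2(s^*)$, then strict monotonicity of $q$ still gives $|u_1'|<|u_2'|$ at $s^*$, and the previous calculus yields a strictly positive derivative there whenever $F(s^*)<0$, contradicting the fact that $\widetilde{W}_1-\widetilde{W}_2$ reaches zero at $s^*$ from below. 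The delicate subcase is $s^*=0$, where $F$ vanishes and the first-order argument degenerates; I expect to treat it by restarting the bootstrap on $[-\beta,0)$ from the strict inequality $|u_1'(r_1(0))|<|u_2'(r_2(0))|$ inherited from $r_1(0)>r_2(0)$ together with $\widetilde{W}_1(0)=\widetilde{W}_2(0)$, which forces $\widetilde{W}_1<\widetilde{W}_2$ immediately below $s=0$ via the derivative formula on $(-\beta,0)$. The main obstacle I anticipate is Step 1, namely extracting $|u_1'(r_I)|<|u_2'(r_I)|$ in Case A from Proposition \ref{varphi3} or the $(f_5)$-based analogue in case (i); once this is in hand, the remainder of the proof rests squarely on $(q_5)$.
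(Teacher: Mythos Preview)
Your propagation argument on $[-\beta,U_{bI})$ is essentially the paper's: the derivative computation based on \eqref{tWder} and $(q_5)$ is the same, and the bootstrap closes for the same reason. Your treatment of the case $-\beta\le U_I<\beta$ is also right in substance, though you over-justify the inequality $|u_1'(r_I)|<|u_2'(r_I)|$: it does not require Proposition~\ref{varphi3} or any $\varphi$-analysis, since at the \emph{first} intersection $r_I$ one has $u_1(r_I)=u_2(r_I)$ with $u_2>u_1$ to the left and $u_2<u_1$ to the right, hence $u_1'(r_I)>u_2'(r_I)$ strictly (equality is ruled out by ODE uniqueness). The paper simply records this and moves on.

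The genuine gap is in your initialization when $U_I\ge\beta$ (so $U_{bI}=b$). The Erbe--Tang $P$-functional argument from the proof of Theorem~\ref{main02} gets you to $s=\beta$ with $r_1(\beta)>r_2(\beta)$ and $\tfrac{Q}{q}(r_1)|u_1'|<\tfrac{Q}{q}(r_2)|u_2'|$, \emph{not} with $q(r_1)|u_1'|<q(r_2)|u_2'|$. Since $F(\beta)=0$, the latter is exactly $\widetilde W_1(\beta)<\widetilde W_2(\beta)$, and you cannot extract it from the former because $q$ and $Q/q$ are both increasing. Nor can you simply run the $\widetilde W$-bootstrap through $[\beta,U_I]$: there $F>0$, so the sign in your derivative identity flips and $\widetilde W_1-\widetilde W_2$ moves in the wrong direction. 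Your phrase ``a short comparison on $[b,\beta]$'' hides precisely the missing ingredient.

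The paper avoids this entirely by working at $s=b$ directly. It observes that if the first zero of $\varphi(\cdot,\alpha^*)$ lies in $(0,r(\beta,\alpha^*)]$ then (by Proposition~\ref{facts2}(2)) one must be in case~(ii), so $(f_4)$--$(f_6)$ hold and Proposition~\ref{varphi3} gives $\varphi(r(b,\alpha))<0$ and $\varphi'(r(b,\alpha))<0$ for $\alpha$ near $\alpha^*$. Then a direct computation of
\[
\frac{\partial}{\partial\alpha}\widetilde W(s,\alpha)=\Bigl(-2\frac{q'}{q}\frac{\varphi}{u'}F(s)+f(s)\varphi+u'\varphi'\Bigr)\frac{q}{\sqrt{|u'|^2+2F(s)}}
\]
evaluated at $s=b$ (where $f(b)=0$, $F(b)<0$) shows $\partial_\alpha\widetilde W(b,\alpha)>0$, while $\partial_\alpha r(b,\alpha)=-\varphi/u'<0$. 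This delivers $r_1(b)>r_2(b)$ and $\widetilde W_1(b)<\widetilde W_2(b)$ without ever passing through~$\beta$. You should replace your $P$-functional detour with this $\partial_\alpha\widetilde W$ argument; once you do, the rest of your proof goes through. Your separate worry about $s^*=0$ is also unnecessary: the derivative inequality is nonstrict only on the null set $\{F=0\}$, which does not obstruct the continuation.
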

\begin{proof}
	Let $r^*$ be the first zero of $\varphi(r,\alpha^*)$ and 	let $U^*=u(r^*,\alpha^*)$.
	Assume  that $U^*\ge\beta$. From Proposition \ref{facts2}(2) $f$ does not satisfy $(f_5)$, thus we are in case $(ii)$ of the theorem and  $f$ satisfies $(f_4)$-$(f_6)$.
	Then from Proposition \ref{varphi3}, we have that $\varphi(r(b,\alpha^*),\alpha^*),\ \varphi'(r(b,\alpha^*),\alpha^*)<0$, thus by continuity there exists $\delta_2\le\delta_1$ such that $\varphi(r(b,\alpha),\alpha), \varphi'(r(b,\alpha),\alpha)<0$ for $\alpha\in(\alpha^*-\delta_2,\alpha^*+\delta_2)$. Using that
	$$\frac{\partial}{\partial\alpha}\widetilde W(s,\alpha)=\Bigl(-2\frac{q'}{q}\frac{\varphi}{u'}F(s)+f(s)\varphi(r)+u'\varphi'\Bigr)\frac{q}{\sqrt{|u'|^2+2F(s)}},$$
	we have
	$\frac{\partial}{\partial\alpha}\widetilde W(s,\alpha)\Bigm|_{s=b}>0$.
	Hence if $\alpha_1<\alpha_2$ in this neighborhood we must have
	$$r_1(b)\ge r_2(b)\quad\mbox{and}\quad \widetilde W_1(b)<\widetilde W_2(b),$$
	implying $|u_1'(r_1(b))|<|u_2'(r_2(b))|$.

	If $U^*<\beta$, then again by continuity there exists $\delta_2\le\delta_1$ such that for $\alpha_1,\ \alpha_2\in(\alpha^*-\delta_2,\alpha^*+\delta_2)$ we have that the first intersection point $U_I$ of $r_1(s)$ and $r_2(s)$ is smaller than $\beta$, so we assume $-\beta<U_I<\beta$.  Since $|u_1'(r_1(U_I))|<|u_2'(r_2(U_I))|$, we have that $\widetilde W_1(U_I)<\widetilde W_2(U_I)$.

Hence, in any case there exists $c\in[-\beta,U_{bI})$  such that   $$\widetilde W_1\le\widetilde W_2,  \quad r_1> r_2, \quad\mbox{and} \quad |u'_1|< |u'_2|
\quad\mbox{ in $[c,U_{bI})$ }.$$		
From \eqref{tWder} and from $(q_5)$ we have that for
	$s\in [c,U_{bI})$,
	\begin{eqnarray*}\frac{1}{2}\frac{d}{d s}(\widetilde W_1-\widetilde W_2)(s)&=&F(s)\Bigl(
		\frac{qq'(r_1(s))}{ u_1'( r_1(s))\widetilde W_1}-
		\frac{ qq'(r_2(s))}{u_2'(r_2(s))\widetilde W_2}\Bigr)\\
		&\ge&\frac{qq'(r_2(s))}{\widetilde W_2(s)}|F(s)|\Bigl(
		\frac{1}{| u_1'( r_1(s))|}-
		\frac{ 1}{|u_2'(r_2(s))|}\Bigr) \\&>&0.
	\end{eqnarray*}
	Hence, $\widetilde W_1-\widetilde W_2$ is  increasing in $[c,U_{bI})$. This implies that the infimum of such $c$ is $-\beta$ proving the result.
	
	\end{proof}

As $F(-\beta)=0$, our first claim \eqref{goal1} in Step 1 follows. We prove now \eqref{goal2}.
	\medskip

 Let $\delta_2$ given in Lemma \ref{sep2mm}. We set as before
$$S_{12}(s)=\frac{q(r_1(s))u_1'(r_1(s))}{q(r_2(s))u_2'(r_2(s))}.$$
Then
\begin{equation}\label{sderiv}
  \frac{d}{ds}S_{12}(s)=S_{12}(s)f(s)\Bigl(\frac{1}{(u_2'( r_2(s)))^2}-\frac{1}{(u_1'( r_1(s)))^2}\Bigr).
\end{equation}
Let $U=\min\{-\beta, U_I\}.$

We will prove first that $m_{1}>m_{2}$ and that for all $s\in[m_{1},U)$ we have
\begin{equation} \label{conclusion} S_{12}(s)<1, \quad |u_1'(s)|<|u_2'(s)|, \quad r_1(s)>r_2(s).\end{equation}

If $U_I>-\beta$ then $U=-\beta$ and  from Lemma \ref{sep2mm},  using that $F(-\beta)=0$, we have that  $S_{12}(U)\le 1$ and $r_1(U)>r_2(U)$.  Thus, $|u_1'(U)|<|u_2'(U)|$. On the other hand, if $U=U_I$, we also have that $S_{12}(U)<1$ and $|u_1'(U)|<|u_2'(U)|$.

From \eqref{sderiv} we have that $S_{12}$(s) is increasing as long as $|u'_1(s)|<|u'_2(s)|$, for $s<U.$ If \eqref{conclusion} does not hold for all $s\in(\max\{m_{1},m_{2}\},U)$, then at the largest point $s_0$ where it fails, we must have that $|u'_1(s_0)|=|u'_2(s_0)|$ and $r_1(s_0)>r_2(s_0)$ implying that $S_{12}(s_0)>1$, a contradiction. Thus \eqref{conclusion} holds in $(\max\{m_{1},m_{2}\},U)$, and hence $m_{1}=\max\{m_{1},m_{2}\}$.

Next we prove that $P_1>P_2 $ in $[m_{1},U].$ From the definition of $P_1$ and $P_2$ we have
\begin{eqnarray*}\bigl(P_1-P_2\bigr)(U)=  \Bigl(Q(r_2)|u'_2|^2-Q(r_1)|u'_1|^2\Bigr)(U)\qquad\qquad\qquad\\
	+2\frac{F}{f}(U)
	\Bigl(q(r_1)|u_1'|-
q(r_2)|u_2'|\Bigr)(U)
+2F(U)(Q(r_2)-Q(r_1))(U)
\end{eqnarray*}
By the definition of $U$ and because $f(s)<0$ if $s<-\beta$, we have that
$$\frac{F}{f}(U)
\Bigl(q(r_1)|u_1'|-
q(r_2)|u_2'|\Bigr)(U)\ge 0,\quad F(U)(Q(r_2(U))-Q(r_1(U)))=0.$$
If $U_I>-\beta$ (so $U=-\beta$), using again that by $(q_3)$  $Q/q^2$ is decreasing, we have
$$\bigl(P_1-P_2\bigr)(U)\ge \Bigl(Q(r_2)|u'_2|^2-Q(r_1)|u'_1|^2\Bigr)(U)=q^2(r_2)|u_2'(r_2)|^2\Bigl(\frac{Q(r_2)}{q^2(r_2)}-S_{12}^2\frac{Q(r_1)}{q^2(r_1)}\Bigr)(U)>0,$$
and if $U$ is the intersection point, then $r_1(U)=r_2(U)$ and thus also
\begin{eqnarray*}
\bigl(P_1-P_2\bigr)(U)\ge Q(r_1(U))(|u_2'(r_2)|^2-|u_1'(r_1)|^2)(U)>0.
\end{eqnarray*}
Hence, from  $(f_3)$ (see Remark \ref{46da3}), \eqref{conclusion} and  $(q_3)$  we have that
\begin{eqnarray*}
	\frac{d}{d s}\bigl(P_1-P_2\bigr)(s)&=&(q(r_2)|u_2'(r_2)|-q(r_1)|u_1'(r_1)|)[1-2H_\infty-2\Bigl(\frac{F}{f}\Bigr)'(s)]\\
&&+2q(r_1)|u_1'(r_1)|(H(r_1)-H_\infty)-2q(r_2)|u_2'(r_2)|(H(r_2)-H_\infty)\\
&<&0.
\end{eqnarray*}
Therefore $P_1>P_2$ in $[m_{1},U].$
In particular, $P_1(m_{1})>P_2(m_{1}).$  Now, since $\frac{d}{ds}P_2(s)>0$, we have that $P_2(m_{1})>P_2(m_{2}),$ and thus $P_1(m_{1})>P_2(m_{2}),$ ending the proof of \eqref{goal2}.

\medskip

\noindent  \underline{Step 2:}
 Now we need to follow the solutions in their way up, we recall that we denote by $\bar r(s,\alpha)$ the inverse of $u$ in this range.

From \eqref{goal2}, using  that $\bar P_2$ decreases in this interval,  we find that
$$\bar P_1(m_{1})=P_1(m_{1})>P_2(m_{2})=\bar P_2(m_{2})>\bar P_2(m_{1}).$$
Let $m^*$ denote the minimum value of $u(\cdot,\alpha^*)$. Since $u'(r(m^*,\alpha^*),\alpha^*)=0$ and  $-\frac{F}{f}(m^*)>0$, by continuity we may choose $\delta_3\in(0,\delta_2)$ small enough so that
$$-2\frac{F}{f}(m_{1})> \frac{Q(\bar r_2(m_{1}))}{q(\bar r_2(m_1))}u_2'(\bar r_2(m_{1})),$$
for all $\alpha_1,\ \alpha_2\in(\alpha^*-\delta_3,\alpha^*+\delta_3)$ and hence
\begin{eqnarray}\label{cnuevo}2\frac{F}{f}(m_{1})q(\bar r_2(m_{1}))u_2'(\bar r_2(m_{1}))+Q(\bar r_2(m_{1}))(u_2'(\bar r_2(m_{1})))^2<0.\end{eqnarray}
Therefore,
\begin{eqnarray*}
0&<&(\bar P_1-\bar P_2)(m_{1})\\
&=&2\frac{F}{f}(m_{1})q(\bar r_2(m_{1}))u_2'(\bar r_2(m_{1}))+Q(\bar r_2(m_{1}))(u_2'(\bar r_2(m_{1})))^2\\
&&-2F(m_{1})(Q(\bar r_1(m_1))-Q(\bar r_2(m_1))
\end{eqnarray*}
implying, by \eqref{cnuevo},
\begin{equation}\label{g3}\bar r_1(m_{1})<\bar r_2(m_{1}).
\end{equation}

 We are now in the same situation as in the  proof of  Proposition \ref{sep5q0} under the assumptions of Theorem \ref{main02}, but in the way up. The result follows arguing in the same way but with the functionals $\bar P$, $\bar W$ and

$$\bar S_{12}(s)=\frac{q(\bar r_1)u_1'(\bar r_1(s))}{q(\bar r_2)u_2'(\bar r_2(s))}.$$

\medskip

\subsection{  Proof of Proposition \ref{sep5q} under the assumptions of Theorem \ref{main2}}\mbox{ }\\

\noindent
 Now we analyze the behavior of the solutions to \eqref{ivp} with initial values in a small  neighborhood of $\alpha^*\in\mathcal G_k$, $k\in\mathbb N$. We recall  that $u(\cdot,\alpha)$ is invertible in each interval $(T_{i-1}(\alpha),T_i(\alpha))$, $T_0(\alpha)=0$, $i=1,2,\ldots,k-1$ and that  we   denote by  $r(\cdot,\alpha)$ its inverse at the intervals where $u$ decreases and by $\bar r(\cdot,\alpha)$ its inverse at intervals where $u$ increases.

This will be done with the help of the two functionals
$$\mathcal T(s,\alpha)=-\frac{F}{f}(s)h(r(s,\alpha))u'(r(s,\alpha),\alpha)-\frac{1}{2}\int_0^{r(s,\alpha)}h(t)dt\Bigl(|u'(r(s,\alpha))|^2+2F(s)\Bigr)+ T_-(s),\quad s\neq b,$$
where
$  T_-(s)=-\int_{\beta}^s\frac{F}{f}(t)dt$ with derivative
\begin{equation}\label{Tderiv}\frac{\partial \mathcal T}{\partial s}(s,\alpha)=\Bigl(\frac{q'}{hq}\int_0^rh(t)dt-\frac{1}{2}-\Bigl(\frac{F}{f}\Bigr)'(s)\Bigr)h(r(s,\alpha))u'(r(s,\alpha),\alpha).
\end{equation}
and
$$ \widehat W(s,\alpha)=  \Bigl(\int_0^{r(s,\alpha)}h(t)dt\Bigr)^{1/2}\sqrt{(u'( r(s,\alpha),\alpha))^2+2F(s)}, \quad s\in[m,\alpha_1],$$
with derivative
$$2\frac{\partial \widehat W}{\partial s} (s,\alpha)=\tilde G(r(s,\alpha))\frac{-u'(r(s,\alpha),\alpha)}{\sqrt{|u'(r(s,\alpha),\alpha)|^2+
		2F(s)}}+\frac{2F(s)h(r(s,\alpha))}{u'(r(s,\alpha),\alpha)W(s,\alpha)},$$
where
$$\tilde G(r)=\frac{\frac{2q'}{q}\int_0^rhdt-h(r)}{(\int_0^rhdt)^{1/2}}=2G(r)\frac {h(r)}{(\int_0^rhdt)^{1/2}}$$
with $G$ defined in $(q_6)$, so that $\tilde G$ is nondecreasing in $r$.

\medskip

 We  will prove first that

\begin{enumerate}
	\item[{\bf Claim :} ]  There exists  $\delta>0$ such that if $\alpha_1,\ \alpha_2\in(\alpha^*-\delta,\alpha^*+\delta)$ with $\alpha_1<\alpha_2$, then  at each $i$-th extremal point $T_{i-1}(\alpha_j)$ we have $|u_1(T_{i-1}(\alpha_1))|<|u_2(T_{i-1}(\alpha_2))|$ and $\mathcal T_1(T_{i-1}(\alpha_1))>\mathcal T_2(T_{i-1}(\alpha_2))$.
	
\end{enumerate}

We do this for $i=1$ and $i=2$, as the argument can be repeated in the next intervals by taking each time a smaller $\delta$. Since we do this a finite number of times, we will arrive to the last interval $(T_{k-1},Z_k)$ with the desired relation between the two minima or maxima depending on whether $k$ is even or odd.

\medskip

\noindent  As before, we assume $\alpha_1<\alpha_2$ and start analyzing the behavior of the two solutions in their way down. With the same notation as above, we first examine the case when $U_I\in(-\beta,\alpha_1)$.

Let us  denote
$$\mathcal T_1(s)=\mathcal T(s,\alpha_1),\quad \mathcal T_2(s)=\mathcal T(s,\alpha_2).$$
 Then $\mathcal T_1(\alpha_1)=T_-(\alpha_1)>T_-(\alpha_2)=\mathcal T_2(\alpha_2)>\mathcal T_2(\alpha_1)$, hence $(\mathcal T_1-\mathcal T_2)(\alpha_1)>0$.

\medskip
\noindent \underline{Step 1:}   We will show that if  $U_I>\beta$, then $(\mathcal T_1-\mathcal T_2)(s)>0$ for all $s\in[\beta,\alpha_1]$. Let $a\in(0,1)$ and $C\ge \overline{G}$ as in $(q_7)$. We claim that
\begin{eqnarray}\label{1f}
h^a(r_1)|u_1'(r_1)|(s)<h^a(r_2)|u_2'(r_2)|(s)\quad\mbox{ for all $s\in (U_I,\alpha_1]$}.
\end{eqnarray}
This is true for $s=\alpha_1$. If \eqref{1f} is false, then there exists a greatest value $s_0\in(U_I,\alpha_1)$ such that $h^a(r_1(s_0))|u_1'(r_1(s_0))|=h^a(r_2(s_0))|u_2'(r_2(s_0))|$ and $r_1(s_0)<r_2(s_0)$. As
$$\frac{d}{ds}(h^a(r_1)|u_1'(r_1)|-h^a(r_2)|u_2'(r_2)|)(s)=h^{a-1}(r_1(s))((1-a)h'(r_1(s))+1)$$
$$-h^{a-1}(r_2(s))((1-a)h'(r_2(s))+1)-f(s)\Bigl(\frac{h^{2a}(r_1(s))}{h^a(r_1(s))|u_1'(r_1(s))|}-\frac{h^{2a}(r_2(s))}{h^a(r_2(s))|u_2'(r_2(s))|}\Bigr),$$
 by our assumptions on $h$ and because, we have
$$\frac{d}{ds}(h^a(r_1)|u_1'(r_1)|-h^a(r_2)|u_2'(r_2)|)(s_0)>0,$$
a contradiction.
Now
$$
\frac{d}{ds}(\mathcal T_1-\mathcal T_2)(s)=
h^{1-a}(r_1(s))(C-G(r_1(s)))h^a(r_1(s))|u_1'(r_1(s))|$$
$$-h^{1-a}(r_2(s))(C-G(r_2(s)))h^a(r_2(s))|u_2'(r_2(s))|$$
$$-(h(r_1(s))|u_1'(r_1(s))|-h(r_2(s))|u_2'(r_2(s))|)(C-\Bigl(\frac{F}{f}\Bigr)'(s))<0
$$
for all $s\in[U_I,\alpha_1]$, implying
$$\mathcal T_1(s)>\mathcal T_2(s)\quad \mbox{for all $s\in[U_I,\alpha_1]$}.$$
Now we prove that
$$h(r_1)|u_1'(r_1)|<h(r_2)|u_2'(r_2)|\quad\mbox{and}\quad r_1>r_2\quad\mbox{for all  $s\in[\beta,U_I]$}.$$

Indeed, assume there exists a greatest $s_0\in(\beta,U_I)$ such that
$$r_1(s)>r_2(s),\quad h(r_1(s))|u_1'(r_1(s))|<h(r_2(s))|u_2'(r_2(s))|\quad s\in(s_0,U_I],$$
with $h(r_1(s_0))|u_1'(r_1(s_0))|=h(r_2(s_0))|u_2'(r_2(s_0))|.$
Then
$$
\frac{d}{ds}(\mathcal T_1-\mathcal T_2)(s)=(C-G(r_1(s)))h(r_1(s))|u_1'(r_1(s))|-(C-G(r_2(s)))h(r_2(s))|u_2'(r_2(s))|$$
$$-(h(r_1(s))|u_1'(r_1(s))|-h(r_2(s))|u_2'(r_2(s))|)(C-\Bigl(\frac{F}{f}\Bigr)'(s))<0$$
in $[s_0,U_I]$, implying that $(\mathcal T_1-\mathcal T_2)(s_0)>0$. But
$$(\mathcal T_1-\mathcal T_2)(s_0)=-F(s_0)\int_{r_2(s_0)}^{r_1(s_0)}h(t)dt$$
$$+\frac{1}{2}\Bigl(\frac{1}{h^2(r_2(s_0))}\int_0^{r_2(s_0)} h(t)dt -\frac{1}{h^2(r_1(s_0))}\int_0^{r_1(s_0)}h(t)dt\Bigr)h^2(r_1(s_0))|u_1'(r_1(s_0))|^2,$$
hence using that $F(s_0)>0$ and $(\int_0^rh(t)dt)/h^2$ increases,    (see Remark \ref{Q6}),  we conclude that $(\mathcal T_1-\mathcal T_2)(s_0)<0$, a contradiction and thus our assertion in Step 1 follows.

\bigskip

Let now $U=\min\{U_I,\beta\}$  and assume that $U\ge-\beta$. For $j=1,2$ , we set
$$ \widehat W_j(s):=\widehat W(s,\alpha_j)=  \Bigl(\int_0^{r_j(s)}h(t)dt\Bigr)^{1/2}\sqrt{(u_j'( r_j(s)))^2+2F(s)}, \quad s\in[m_j,M_j],$$
(The functional $\widehat W_j$ is well defined in this interval, since
$(u_j'(r))^2+2F(u_j(r))>0$ for $r\in[0,T_{k-1}(\alpha^*)+\eta]$.)

Then at $U$ we have that
\begin{equation} \label{barcompa0}  r_1(U)\ge  r_2(U)\quad\mbox{and}\quad \widehat W_1( U)<\widehat W_2(U).\end{equation}

\medskip
\noindent \underline{Step 2:} We will prove now that we arrive to $-\beta$ with
	$$r_1(-\beta)> r_2(-\beta), \quad\Bigl(\int_0^{r_1(-\beta)}h(t)dt\Bigr)^{1/2}|u_1'( r_1(-\beta))|<\Bigl(\int_0^{r_2(-\beta)}h(t)dt\Bigr)^{1/2}|u_2'( r_2(-\beta))|, $$
so in particular, $\mathcal T_1(-\beta)>\mathcal T_2(-\beta)$.

 Clearly, $|u'_1(U)|<|u'_2(U)|$, and thus $r_1>r_2$ in some small left neighborhood of $U.$
	Hence, there exists $s_0\in[-\beta,U)$ such that   $$ \widehat W_1\le \widehat W_2,  \quad r_1> r_2, \quad\mbox{and} \quad |u'_1|< |u'_2|
	\quad\mbox{ in $[s_0,U)$ }.$$
	From the definition of $ \widehat W_j(s)$ we have
	\begin{eqnarray*}2\frac{d}{ds}  \widehat W_j(s)=\tilde G(r_j(s))\frac{|u_j'(r_j(s))|}{\sqrt{(u_j'(r_j(s)))^2+
			2F(s)}}+\frac{2|F(s)|h(r_j(s))}{|u_j'(r_j(s))|\widehat W_j(s)},
	\end{eqnarray*}
	 We have
$$
	2\frac{d}{d s}(\widehat W_1-\widehat W_2)(s)\ge \tilde G(r_1(s))\Bigl(\frac{|u_1'(r_1(s))|}{\sqrt{(u_1'(r_1(s)))^2+
			2F(s)}}-\frac{|u_2'(r_2(s))|}{\sqrt{(u_2'(r_2(s)))^2+
			2F(s)}}\Bigr)$$
$$+\frac{(\tilde G(r_1(s))-\tilde G(r_2(s)))|u_2'(r_2(s))|}{\sqrt{(u_2'(r_2(s)))^2+
			2F(s)}}+2|F(s)|\Bigl(\frac{h(r_1(s))}{|u_1'(r_1(s))|\widehat W_1(s)}-\frac{h(r_2(s))}{|u_2'(r_2(s))|\widehat W_2(s)}\Bigr),
$$
	hence using the monotonicity of $\tilde G$ and $h$ we find that
	$$\frac{d}{d s}(\widehat W_1-\widehat W_2)(s_0)>0,$$ and thus $s_0=-\beta$, proving our claim in Step 2.
\medskip

\noindent \underline{Step 3:} Proof of the Claim. Set $\tilde U=\min\{-\beta,U_I\}$. Then
$$\mathcal T_1(\tilde U)>\mathcal T_2(\tilde U)\quad\mbox{and}\quad h(r_1)|u_1'(r_1)|(\tilde U)<h(r_2)|u_2'(r_2)|(\tilde U).$$
Indeed, if $\tilde U=-\beta$ it follows from Step 2 and Remark \ref{Q6}, and if $\tilde U=U_I$, it follows from the definition of $U_I$.

As
\begin{eqnarray*}
\frac{d}{ds}(h(r_1)|u_1'(r_1)|-h(r_2)|u_2'(r_2)|)(s)&=&|f(s)|\Bigl(\frac{h(r_1)}{|u_1'(r_1)|}-\frac{h(r_2)}{|u_2'(r_2)|}\Bigr)\\
&=&|f(s)|\Bigl(\frac{h^2(r_1)}{h(r_1)|u_1'(r_1)|}-\frac{h^2(r_2)}{h(r_2)|u_2'(r_2)|}\Bigr)
\end{eqnarray*}
we have,
$$h(r_1)|u_1'(r_1)|<h(r_2)|u_2'(r_2)|\quad\mbox{in }(\max\{m_1,m_2\},\tilde U],$$
implying $m_1>m_2$, where as before, $m_i=u_i(T_1(\alpha_i))$, $i=1,2$.
Next we see that
$$
\frac{d}{d s}(\mathcal T_1-\mathcal T_2)(s)=(\overline{G}-G(r_1))(h(r_1)|u_1'(r_1)|-h(r_2)|u_2'(r_2)|)+h(r_2)|u_2'(r_2)|(G(r_2)-G(r_1))$$
$$-(h(r_1)|u_1'(r_1)|-h(r_2)|u_2'(r_2)|)\Bigl(\overline{G}-\Bigl(\frac{F}{f}\Bigr)'(s)\Bigr)
$$
and thus $\frac{d}{d s}(\mathcal T_1-\mathcal T_2)<0$ in $(m_1,\tilde U]$ implying that also $\mathcal T_1(m_1)>\mathcal T_2(m_1)$. Since by $(f_7)$ we have that $\mathcal T_2$ increases in $(m_2,m_1)$, we also have
$$\mathcal T_1(m_1)>\mathcal T_2(m_2).$$

 In order to prove the assertion for $i=2$, we consider
$$\bar{\mathcal T}(s,\alpha)=-\frac{F}{f}(s)h(\bar r(s,\alpha))u'(\bar r(s,\alpha),\alpha)-\frac{1}{2}\int_0^{\bar r(s,\alpha)}h(t)dt\Bigl(|u'|^2+2F(s)\Bigr)+ T_+(s),\quad s\neq b,$$
$T_+(s)=-\int_{-\beta}^s\frac{F}{f}(t)dt$
and
\begin{equation}\label{barTderiv}\frac{\partial \bar{\mathcal T}}{\partial s}(s,\alpha)=\Bigl(\frac{q'}{hq}\int_0^{\bar r(s,\alpha)}h(t)dt-\frac{1}{2}-\Bigl(\frac{F}{f}\Bigr)'(s)\Bigr)h(\bar r(s,\alpha))u'(\bar r(s,\alpha),\alpha),
\end{equation}
and we
note that if $(f_7)$  holds, then
$\frac{d}{ds}\bar{\mathcal T}_j(s)\le 0$
for all $|s|\ge \beta$. Hence
$$\bar{\mathcal T}_1(m_1)=\mathcal T_1(m_1)>\mathcal T_2(m_2)=\bar{\mathcal T}_2(m_2)>\bar{\mathcal T}_2(m_1)$$
As we are assuming that $f$ is odd, the result for $i=2$ follows by setting $v_j(r)=-u_j(r)$ and observing that now $-m_j$ plays the role of $\alpha_j$, so we conclude that
$$M_1<M_2 \quad\mbox{and}\quad \mathcal T_1(M_1)>\mathcal T_2(M_2)$$
where $M_i=u_i(T_2(\alpha_i))$, $i=1,2$.
 We remark here that if $f$ is not odd but satisfies the right sign assumptions, we can repeat the argument in the way up.
\bigskip

\noindent \underline{Step 4:}
We proceed now to our final step. To this end, we may assume without loss of generality that $k$ is odd, so that $T_{k-1}(\alpha_j)$ is a maximum point, and we fix $\delta$ as the smallest needed from the previous intervals.

Let $r_I$ denote the first intersection point of $u_1$ and $u_2$ in $(T_{k-1}(\alpha^*), Z_k(\alpha^*))$ guaranteed by Proposition \ref{facts2}(3)(ii) and set $U_I=u_1(r_I)=u_2(r_I)$. 	 Arguing as in Step 1,  with $U=\min\{\beta,U_I\}$, we obtain that \eqref{barcompa0} holds, that is,
\begin{equation} \label{barcompa1}   r_1(U)\ge   r_2(U)\quad\mbox{and}\quad  \widehat W_1( U)< \widehat W_2(U).\end{equation}
Then, a similar argument as the one used at the end of the proof of Proposition \ref{sep5q0} under the assumptions of Theorem \ref{main02}, but with the functional $\widehat W$, it follows that  $S_1\ge S_2$
and
$$ r_1(s)> r_2(s), \quad  \widehat W_1(s)< \widehat W_2(s),\quad\mbox{and \; $|u'_1( r_1(s))|<|u'_2( r_2(s))|$} \quad s\in[S_1,U),$$
where now
$$S_j:= \inf\{s\in (u_j(Z_k(\alpha_j)), u_j(T_{k-1}(\alpha_j))))\ :\ \quad |u_j'( r_j(s))|^2+2F(s)>0\}.$$

 We note that $S_j=0$ if and only if $\alpha_j\in\mathcal G_k\cup\mathcal N_k$.
 \medskip

Hence, if $\alpha_1\in\mathcal G_k\cup\mathcal N_k$, then $S_1=0$ implying $S_2=0$ and $\alpha_2\in\mathcal G_k\cup\mathcal N_k$. As
$Z_k(\alpha_1)=r_1(0)>r_2(0)=Z_k(\alpha_2)$ and $|u_1'(Z_k(\alpha_1))|<|u_2'(Z_k(\alpha_2))|$ we conclude that $\alpha_2\in\mathcal N_k$.
	
On the other hand, if $\alpha_2\in\mathcal G_k$, then $S_2=0$. As $|u_2'(Z_k(\alpha_2))|=0$, we conclude that $S_1>0$ implying $\alpha_1\in\mathcal P_k$.
\bigskip

\subsection{ Proof of Theorems \ref{main1} and \ref{main2}.}\mbox{ }\\

Suppose $\alpha^*\in\mathcal G_1$ and let
$$A_1=\{\alpha\ge\beta\ |\ (\alpha^*,\alpha)\subset\mathcal N_1\}.$$
$A_1\not=\emptyset$ by Proposition  \ref{sep5q}, so let $\bar\alpha=\sup A_1$. If $\bar\alpha<\infty$, then by Proposition \ref{facts1}(1), $\bar\alpha\not\in\mathcal N_1\cup\mathcal P_1$. If $\bar \alpha\in\mathcal G_1$, then by Proposition \ref{sep5q} a small left neighborhood of $\bar\alpha$ is contained in $\mathcal P_1$, a contradiction and thus $\bar \alpha=\infty$ and $(\alpha^*,\infty)\subseteq \mathcal N_1$ implying the uniqueness of the ground states.
	
Let $\alpha^*\in\mathcal G_k$, $k>1$. Then $\alpha^*\in\mathcal N_{1}$. We claim that there exists $\alpha_1\in\mathcal G_1$, with $\alpha_1<\alpha^*$. Indeed, as $\beta\in\mathcal P_1$, the set $B=\{\alpha\ge\beta\ |\ (\beta,\alpha)\subset\mathcal P_1\}$ is nonempty and bounded above by $\alpha^*$. Clearly, $\alpha_1=\sup B\in\mathcal G_1$ and $\mathcal N_1=(\alpha_1,\infty)$.
Let
$$A_k=\{\alpha\ge\beta\ |\ (\alpha^*,\alpha)\subset\mathcal N_k\}.$$
By Proposition \ref{sep5q}, a small right neighborhood of $\alpha^*$ is contained in $\mathcal N_k$, hence the $A_k$
is not empty. We will prove that $\sup A_k=\infty$, and thus $(\alpha^*,\infty)\subset\mathcal N_k$ proving the uniqueness.
Assume by contradiction that $\bar\gamma=\sup A_k<\infty$. Since $\mathcal P_k$ and $\mathcal N_k$ are open, $\bar \gamma\not\in\mathcal N_k\cup\mathcal P_k$ and $\bar\gamma\not\in\mathcal G_k$  by Proposition \ref{sep5q}, implying that $\bar \gamma\not\in \mathcal N_{k-1}$.
Repeating the argument we conclude that $\bar \gamma\not\in\mathcal N_1$. As $\alpha_1<\alpha^*<\bar \gamma$, this is a contradiction.

\end{document}